\newcommand{\sym}{\mathfrak{S}}
\renewcommand{\S}{\mathcal{S}}
\newcommand{\VS}{\mathbf{S}}
\newcommand{\Descd}{\mathbf{Descd}}
\newcommand{\tdelta}{\tilde{\Delta}}
\newcommand{\CC}{\mathbf{C}}
\newcommand{\B}{\mathcal{B}}
\def\shuff#1#2{\mathbin{
      \hbox{\vbox{
        \hbox{\vrule
              \hskip#2
              \vrule height#1 width 0pt
               }%
        \hrule}%
             \vbox{
        \hbox{\vrule
              \hskip#2
              \vrule height#1 width 0pt
               \vrule }%
        \hrule}%
}}}
\def\shuffl{{\mathchoice{\shuff{7pt}{3.5pt}}%
                        {\shuff{6pt}{3pt}}%
                        {\shuff{4pt}{2pt}}%
                        {\shuff{3pt}{1.5pt}}}}%
\def\shuffle{\, \shuffl \,}
\newtheorem{defi}{\indent Definition}
\newtheorem{lemma}[defi]{\indent Lemma}
\newtheorem{cor}[defi]{\indent Corollary}
\newtheorem{theo}[defi]{\indent Theorem}
\newtheorem{prop}[defi]{\indent Proposition}
\begin{document}

\title[Structure of shuffle algebras]{\Large{Natural endomorphisms of shuffle algebras}}

\author{Lo\"ic Foissy}

\author{Fr\'ed\'eric Patras}

\date{}

\maketitle

\section{Introduction}

Shuffles have a long history, starting with the probabilistic study of card shufflings in the first part of the 20th century by Borel, Hadamard, Poincar\'e and others. Their theory was revived in the 50's, for various reasons. In topology, the combinatorics of (non commutative) shuffle products was the key to the definition of topological products such as the ones existing on cochain algebras and the cohomology groups of topological spaces. Commutative shuffle products were the key to the study of the homology of abelian groups and commutative algebras. In combinatorics and for the theory of iterated integrals, commutative shuffle products played a key role 
resulting in the global picture of the modern theory of free Lie algebras given in C. Reutenauer's seminal \it Free Lie algebras \rm \cite{reutenauer}.

The classical approach to shuffle algebras, as featured for example in Reutenauer's book, focussed on Lie theoretical properties, that is on the enveloping algebra structure of tensor algebras: the shuffle product arises naturally in this framework by dualizing the Hopf algebra structure of the tensor algebra and many properties of shuffles can be derived from that particular approach.

However, one can try to follow a different path, namely start directly from the combinatorics of shuffles, following the ideas originally developed by M.-P. Sch\"utzenberger \cite{schutz}. A series of recent works by F. Chapoton, C. Malvenuto, C. Reutenauer, the second author of the present article, and others, provides many new tools to revisit the theory of shuffles. This is the purpose of the present article to put these tools to use. 

Concretely, we focus on the adaptation to the study of shuffles of the main combinatorial tool in the theory of free Lie algebras, namely the existence of a universal algebra of endomorphisms for tensor and other cocommutative Hopf algebras: the family of Solomon's descent algebras of type $A$ \cite{reutenauer,patJA}.
We show that there exists similarly a natural endomorphism algebra for commutative shuffle algebras, which is a natural extension of the Malvenuto-Reutenauer Hopf algebra of permutations, or algebra of free quasi-symmetric functions. We study this new algebra for its own, establish freeness properties, study its generators, bases, and also feature its relations to the internal structure of shuffle algebras.

\section*{Acknowledgements} We thank warmly C. Reutenauer for several discussions in Montr\'eal that originated the present work, and the UQAM (Universit\'e du Qu\'ebec \`a Montr\'eal) and the LIRCO (Laboratoire International Franco-Qu\'eb\'ecois de Recherche en Combinatoire) for their support.

\section{Shuffles}

As mentioned in the introduction, shuffle products can be understood in the commutative and noncommutative frameworks. The two uses still coexist (topological shuffles are noncommutative, whereas the use in combinatorics is to refer to shuffle products as the commutative ones of the theory of free Lie algebras). We survey briefly the historical foundations of the theory since it will appear later that the combinatorics of the objects that were first considered to study shuffle products (geometrical simplices and tensors) is closely related to the new algebraic structures to be introduced in the present paper.

According to \cite{McL1}, the algebraic theory of these products was first established in \cite{EM} together with the introduction of the notion of half-shuffles, which was to become the classical way to define recursively shuffle products.
In view of later developments, the fundamental observation \cite[Fla 5.7]{EM} is that the topological (cartesian) product of simplices $\ast$ decomposes into two half-products $\prec , \succ $:
$$x\ast y = x \prec y +x\succ y.$$ The associativity of the product follows then formally from the distributivity of left ($\prec $) and right ($\succ $) half-shuffles with respect to the $\ast$ product \cite[Thm 5.2]{EM}. 

Whereas the topological product is associative but not commutative (the associativity holds automatically, but the commutativity holds only up to orientation and homotopy), Eilenberg and MacLane were the first to consider also the purely commutative case when dealing with the bar construction (a topological object which combinatorial structure is the one of the tensor algebra), see \cite[Sect. 18]{EM} and \cite{EM0}. These ideas were rediscovered independently by M.P. Sch\"utzenberger \cite[1-18]{schutz}, who also clarified the set of relations necessary to prove the associativity relation (as was realized later, his proof of the associativity relation does not require the commutativity asumption and, up to a rewriting, coincides in the end essentially with the one given by Eilenberg-MacLane in a topological framework). With our previous notation, the half-shuffles associativity relations read:
\begin{eqnarray}\label{dend}
x\prec (y\ast z)&=&(x\prec y)\prec z; \\
\label{dend2} (x\ast y)\succ z &=&x\succ (y\succ z); \\
\label{dend3}  (x\succ y)\prec z&=&x\succ (y\prec z). 
\end{eqnarray}
In the commutative case, the commutativity property translates into $x\prec y = y\succ x$ and these relations simplify to
\begin{equation}\label{shuffle}
(a\prec b)\prec c=a\prec (b\prec c+c\prec b), 
\end{equation}
see \cite[(18.7)]{EM}, \cite[(S0)]{schutz}, \cite{lod}.

For simplicity, we stick from now on to the current terminology and call \it shuffle algebra \rm a commutative shuffle algebra, that is an algebra with a non associative ``half-product'' $\prec $ satisfying the relation (\ref{shuffle}) and \it dendriform algebra \rm a noncommutative shuffle algebra, that is an associative algebra with two half-products satisfying the associativity relations (\ref{dend}-\ref{dend3}) (but not the commutativity relation $x\prec y=y\succ x$; the half-shuffles relations have also been attributed to Rota, see \cite{lod}). See also \cite{Chap1, Aguiar, Foissy1, km, nt2}
for further general informations on the subject and applications of dendriform structures to various problems in algebra and combinatorics related to the ones we consider in the present article.

Shuffle algebras are sometimes refered to as Zinbiel algebras as a follow up of Cuvier's Jan. 1991 Thesis where Leibniz algebras were first introduced and studied (Zinbiel is the word Leibniz inverted, a successful joke suggested by the topologist J.M. Lemaire). Cuvier proved indeed that the cochain complex computing the homology of Leibniz algebras is the tensor algebra \cite{cuv1,cuv2} -from which one can deduce by standard procedures that the notions of Leibniz algebras and shuffle algebras are Koszul dual \cite{GK}. Since the original name ``alg\`ebres de shuffle'' is better known and accepted we prefer to stick to the usual terminology.

The classical shuffle bialgebra over an alphabet fits into this picture. 
Let $X$ be a graded, connected alphabet, that is to say $\displaystyle X=\bigcup_{n \geq 1} X_n$. 
For all $x \in X_n$, we put $|x|=n$: this is the \emph{weight} of $x$.
Let $T(X)$ be the tensor algebra generated by $X$ over $\mathbf{Q}$. For all $n \in \mathbb{N}$, let $T_n(X)$ be the subspace of $T(X)$ generated
by the words $y_1...y_n,\ y_i\in X$ of length $n$, and $T^n(X)$ be the subspace generated by the words of weight $n$, the weight of a word being the sum of the weights of its letters: $|y_1...y_n|:=|y_1|+...+|y_n|$. The product in the tensor algebra (the concatenation product) is written $\cdot$:
$$y_1...y_n \cdot z_1...z_p:= y_1...y_nz_1...z_p.$$

\begin{defi} 
The shuffle bialgebra $Sh(X)=\bigoplus\limits_{n\in\mathbb{N}}Sh^n(X) $ is the graded connected (i.e. $Sh^0(X)=\mathbf{Q}$) commutative Hopf algebra such that
\begin{itemize}
 \item The component of degree $n$ of $Sh(X)$, $Sh^n(X)$ is the linear span of the words of weight $n$ over $X$ (so that as vector spaces $Sh^n(X)=T^n(X)$). We write similarly $Sh_n(X)$ for the linear span of the words of length $n$;
\item The product $\shuffle$ is defined recursively as the sum of the two half-shuffle products $\prec ,\ \succ $:
$$y_1...y_n \prec  z_1...z_p := y_1\cdot (y_2...y_n \prec  z_1...z_p)$$
with $\shuffle = \prec  + \succ $ and $y_1...y_n \succ  z_1...z_p := z_1...z_p \prec  y_1...y_n$.
\item The coalgebra structure is defined by the deconcatenation coproduct:
$$\Delta (y_1...y_n):=\sum\limits_{0\leq k\leq n} y_1...y_k\otimes y_{k+1}...y_n.$$
\end{itemize}
\end{defi}

Recall that the notions of connected commutative Hopf algebra and connected commutative bialgebra are equivalent since a graded connected commutative bialgebra always has an antipode. The (graded) dual bialgebra of $Sh(X)$ is the tensor algebra $T(X)$ over $X$, we refer to \cite{reutenauer} for details and proofs.

Equivalently, for all $x_1,\ldots,x_{k+l} \in X$:
\begin{eqnarray*}
x_1\ldots x_k \prec x_{k+1}\ldots x_{k+l}&=& \sum_{\alpha\in Des_{\subseteq \{k\}},\:\alpha^{-1}(1)=1}
x_{\alpha^{-1}(1)}\ldots x_{\alpha^{-1}(k+l)},\\
x_1\ldots x_k \succ x_{k+1}\ldots x_{k+l}&=& \sum_{\alpha\in Des_{\subseteq \{k\}},\:\alpha^{-1}(1)=k+1}
x_{\alpha^{-1}(1)}\ldots x_{\alpha^{-1}(k+l)},
\end{eqnarray*}
where the $\alpha$ are permutations  of $[k+l]=\{1,...,k+l\}$.

The notation $\alpha\in Des_{\subseteq \{k\}}$ means that $\alpha$ has at most one descent in position $k$. 
Recall that a permutation $\sigma$ of $[n]$ is said to have a descent in position $i<n$ if $\sigma(i)>\sigma(i+1)$. The descent set of $\sigma$, $desc(\alpha)$ is the set of all descents of $\alpha$,
$$desc(\sigma):=\{i<n ,\sigma(i)>\sigma(i+1)\}.$$
For $I\subset [n]$, we write $Des_{I}:=\{\sigma , desc(\sigma)=I\}$ and $Des_{\subseteq I}:=\{\sigma , desc(\sigma)\subseteq I\}.$

\begin{prop}\label{freeness}
 As a commutative algebra, $Sh(X)$ is the \it free algebra \rm over $X$ for the relations (\ref{shuffle}). 
\end{prop}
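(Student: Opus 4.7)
The plan is to verify the universal property of free commutative shuffle algebras: for any shuffle algebra $A$ and any map $f : X \to A$, construct a unique shuffle algebra morphism $\tilde f : Sh(X) \to A$ extending $f$. The first step is to observe that $X$ generates $Sh(X)$ under $\prec$ and that every word admits a canonical right-normed expression. Specializing the recursive definition of $\prec$ to a single letter on the left gives $y \prec w = y \cdot w$ for every $y \in X$ and every word $w$, whence by iteration
\[y_1 y_2 \cdots y_n \;=\; y_1 \prec (y_2 \prec (\cdots \prec (y_{n-1} \prec y_n)\cdots)).\]
This simultaneously forces the uniqueness of $\tilde f$ and pins down its definition:
\[\tilde f(y_1 \cdots y_n) \;:=\; f(y_1) \prec \bigl(f(y_2) \prec \cdots \prec \bigl(f(y_{n-1}) \prec f(y_n)\bigr)\cdots\bigr),\]
extended linearly. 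Existence then reduces to verifying the morphism identity $\tilde f(w \prec w') = \tilde f(w) \prec \tilde f(w')$ for all pairs of words.

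The identity is proved by strong induction on $N = |w| + |w'|$. When $|w| = 1$, writing $w = y \in X$, both sides equal $f(y) \prec \tilde f(w')$ directly from $y \prec w' = y \cdot w'$ and the definition of $\tilde f$. When $|w| \geq 2$, decompose $w = y_1 \cdot w''$ with $|w''| \geq 1$; the shuffle relation (\ref{shuffle}) in $Sh(X)$ gives
\[w \prec w' \;=\; (y_1 \prec w'') \prec w' \;=\; y_1 \prec (w'' \prec w' + w' \prec w''),\]
and applying $\tilde f$, with the help again of $y_1 \prec u = y_1 \cdot u$, yields $\tilde f(w \prec w') = f(y_1) \prec \bigl(\tilde f(w'' \prec w') + \tilde f(w' \prec w'')\bigr)$. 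The induction hypothesis applies to each summand (both have total length $N - 1$), converting this to $f(y_1) \prec \bigl(\tilde f(w'') \prec \tilde f(w') + \tilde f(w') \prec \tilde f(w'')\bigr)$, and a mirror application of (\ref{shuffle}) inside $A$ collapses the result to $(f(y_1) \prec \tilde f(w'')) \prec \tilde f(w') = \tilde f(w) \prec \tilde f(w')$.

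The main obstacle is choosing an induction that closes. Induction on $|w|$ alone fails because the shuffle relation introduces the term $w' \prec w''$ whose left factor $w'$ is of uncontrolled length, forcing induction on $|w| + |w'|$. The elegance of the argument is that relation (\ref{shuffle}) is invoked once in $Sh(X)$ to reduce the product and once in $A$ to reassemble the image, which is exactly what encodes freeness.
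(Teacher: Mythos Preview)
Your proof is correct and is essentially the argument the paper points to without carrying out: the paper does not prove Proposition~\ref{freeness} itself but attributes it to Sch\"utzenberger and remarks, after the Rewriting Lemma~\ref{rewriting}, that this lemma ``can be used to prove the Prop.~\ref{freeness}, see \cite[1.19]{schutz}.'' Your argument is precisely that execution: the identity $y_1\cdots y_n = y_1\prec(y_2\prec(\cdots\prec y_n)\cdots)$ forces uniqueness and dictates the candidate morphism, and the single relation (\ref{shuffle}), applied once in $Sh(X)$ to unfold and once in $A$ to refold, drives the induction on total length.

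Two minor points worth tidying. First, your symbol $|w|$ denotes the \emph{length} of $w$, whereas in the paper $|\,\cdot\,|$ is reserved for the weight; using $\ell(w)$ or simply ``length'' would avoid the clash. Second, your argument implicitly treats only nonempty words; it is worth a sentence that in the unital setting one sets $\tilde f(1)=1$ and that the conventions $w\prec 1=w$, $1\prec w=0$ make the boundary cases trivial.
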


The result goes back to \cite{schutz}, where the reader can also find a discussion of the role of the unit in shuffle algebras (there is a subtelty to make sense of the half-products with 1, however this problem is easily settled and doesn't need to be discussed here: we will only use the fact that $1$ is a unit for $\shuffle$ and, for half-shuffle products of 1 with words $w$ use Sch\"utzenberger's conventions $w\prec 1=w,\ 1\prec w=0$).

Whereas most studies focussed on shuffle algebras over sets,
shuffle algebras over graded sets are equally important objects. Two classical examples are provided by the iterated bar construction (a key to the computation of the homology of $K(\Pi , n)$ spaces \cite{EM0,EM}) and \it mould calculus \rm, which focusses on problems such as the study and classification of differential equations by algebraic means \cite{sauzin,mnt}. 
In the first framework, one constructs the shuffle algebra over a graded commutative algebra (this is actually one of the reasons for Eilenberg and MacLane works on shuffles), in the second case, the shuffle algebra over graded derivations (e.g., in dimension 1, the family of the $x^n\partial_x$ with degree $n-1$).

The remaining part of the present article is devoted to the internal study of $Sh(X)$, where $X$ is a graded alphabet. We insist on the action of natural endomorphisms, mimicking what is known for the shuffle algebra over a non graded set. 
We also recover as a byproduct Chapoton's rigidity theorem showing that an abstract shuffle bialgebra (an abstract shuffle algebra with a suitable coproduct) can always be realized as the shuffle algebra over a graded set \cite{Chap1}.

\section{Graded permutations}

We have mentioned the foundational connexion between shuffles and the geometry of simplices. This relationship can be encoded purely combinatorially by the existence of a noncommutative shuffle product on the direct sum of the symmetric group algebras, this is the ``geometrical ring of the symmetric groups'' of  \cite[p. 180]{patSMF}, a construction that relates directly the classical geometrical approach to shuffle products with the combinatorial approach.

The direct sum of the symmetric group algebras $\bf S$ carries in fact a much richer structure than a mere dendriform product: Malvenuto and Reutenauer first showed that it carries actually a noncommutative noncocommutative Hopf algebra structure and proved that it generalizes naturally various fundamental algebraic structures in the theory of free Lie algebras such as Solomon's descent algebras or quasi-symmetric functions, two noncommutative generalizations of the ring of symmetric functions \cite{MR}. This Hopf algebra or Malvenuto-Reutenauer (MR) Hopf algebra can be furthermore realized as an algebra of generalized quasi-symmetric functions and is often refered to in the litterature as the Hopf algebra of free quasi-symmetric functions \cite{dht}. 

The MR Hopf algebra is closely related to various fundamental notions of noncommutative representation theory such as the descent algebra of type $A$ or the algebra of quasi-symmetric functions. These later notions are known to generalize to other Coxeter groups than the symmetric groups and, up to a certain extent, to wreath-products of symmetric groups with cyclic groups \cite{mr2,bh}. Colored permutations appear naturally in this framework (the finite set of colors corresponding to the elements of the cyclic groups) \cite{nt}. Some of our results generalize further these results from the case of finite cyclic groups to the integers.

These new Hopf algebras are typical examples of ``combinatorial quantum groups'' (graded Hopf algebras which are neither commutative nor cocommutative but can be naturally interpreted as a ``group of symmetries'', e.g. through the natural action of permutations on tensors) and have originated many studies. The one we will focus on and generalize is due to the second Author of the present article, who introduced the notion of bidendriform bialgebra and showed that the MR Hopf algebra carries such a structure -with various consequences such as the proof of the Free Lie conjecture (according to which the primitive elements of the MR Hopf algebra form a free Lie algebra) \cite{Foissy1}. 
These results, as we show now, generalize to graded permutations, which are a natural extension of the notions of permutations and colored permutations when studying shuffle algebras over graded sets. 

\ \par

Recall first that, as for any Hopf algebra, $End(Sh(X))$ carries an associative convolution product $\star$ defined  by 
$$(f\star g)=\shuffle \circ (f\otimes g)\circ \Delta.$$ 
We define two other products on $End(Sh(X))$ by $$f\prec g=\prec \circ (f\otimes g)\circ \Delta$$
and $$f\succ g=\succ\circ (f\otimes g)\circ \Delta.$$ As $\prec+\succ=\shuffle$, $\star=\prec+\succ$.

\begin{lemma}\label{dendalgle}
$(End(Sh(X)),\prec,\succ)$ is a dendriform algebra.
\end{lemma}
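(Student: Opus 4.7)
The plan is to lift the three dendriform axioms, which already hold pointwise in $Sh(X)$, to $End(Sh(X))$ using only the coassociativity of $\Delta$. This is the standard convolution transfer: for any dendriform algebra $A$ and any coassociative coalgebra $C$, the convolution built from the dendriform splitting of the product of $A$ equips $\mathrm{Hom}(C,A)$ with a dendriform structure.

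First, I would record that $(Sh(X),\prec,\succ)$ is itself a dendriform algebra in the sense of relations (\ref{dend})--(\ref{dend3}). Relation (\ref{dend}) is literally the defining Zinbiel axiom (\ref{shuffle}). Since $Sh(X)$ is commutative we have $x\succ y=y\prec x$ and $\shuffle$ is symmetric, so relations (\ref{dend2}) and (\ref{dend3}) both reduce to the identity $(z\prec x)\prec y=(z\prec y)\prec x$, which follows by applying (\ref{shuffle}) on both sides and using the commutativity of $\shuffle$. Equivalently, as maps $Sh(X)^{\otimes 3}\to Sh(X)$, one has
$$\prec\circ(\prec\otimes\mathrm{id})=\prec\circ(\mathrm{id}\otimes\shuffle),\quad \prec\circ(\succ\otimes\mathrm{id})=\succ\circ(\mathrm{id}\otimes\prec),\quad \succ\circ(\shuffle\otimes\mathrm{id})=\succ\circ(\mathrm{id}\otimes\succ).$$

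Second, I would translate each of these into the corresponding axiom for endomorphisms. Unpacking, for instance, $(f\prec g)\prec h$ and $f\prec(g\star h)$ yields
$$\prec\circ(\prec\otimes\mathrm{id})\circ(f\otimes g\otimes h)\circ(\Delta\otimes\mathrm{id})\circ\Delta \quad\text{and}\quad \prec\circ(\mathrm{id}\otimes\shuffle)\circ(f\otimes g\otimes h)\circ(\mathrm{id}\otimes\Delta)\circ\Delta.$$
Coassociativity identifies the two iterated coproducts on the input side, and the first of the three pointwise identities above matches the output side, giving $(f\prec g)\prec h=f\prec(g\star h)$. The remaining endomorphism axioms $(f\succ g)\prec h=f\succ(g\prec h)$ and $(f\star g)\succ h=f\succ(g\succ h)$ are obtained in exactly the same way from the second and third pointwise identities.

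The proof is entirely formal; the only real input is the pointwise dendriform structure on $Sh(X)$. I anticipate no conceptual obstacle, and the only care needed is bookkeeping, namely keeping the order of the tensor factors and the placements of $\Delta$ correctly aligned when unwinding compositions.
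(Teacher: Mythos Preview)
Your proposal is correct and follows essentially the same approach as the paper: unwind $(f\prec g)\prec h$ and $f\prec(g\star h)$ as compositions, then match them using coassociativity of $\Delta$ on the input side and the pointwise dendriform identity $\prec\circ(\prec\otimes\mathrm{id})=\prec\circ(\mathrm{id}\otimes\shuffle)$ on the output side, with the other two axioms handled identically. The only difference is that you spell out in more detail why the three dendriform identities (\ref{dend})--(\ref{dend3}) hold in $Sh(X)$ as consequences of the Zinbiel axiom and commutativity, whereas the paper takes this for granted.
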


\begin{proof} Let $f,g,h \in End(Sh(X))$. Then:
\begin{eqnarray*}
(f\prec g)\prec h&=&\prec \circ (\prec \otimes Id)\circ (f\otimes g \otimes h) \circ (\Delta\otimes Id)\circ \Delta\\
&=&\prec\circ (Id \otimes \shuffle)\circ (f\otimes g\otimes h)\circ (Id \otimes \Delta)\circ \Delta\\
&=&f\prec(g\star h).
\end{eqnarray*}
The two other axioms are proved in the same way. \end{proof}

These products on $End(Sh(X))$ dualize to the tensor algebra, for this dual point of view we refer to \cite{fisher}, which contains various applications of dendriform structures to Hopf algebras of graphs and twisted Hopf algebras (Hopf algebras in the category of species).

The following Lemma, although a direct, straightforward, consequence of the recursive definition of the shuffle product on $Sh(X)$ and of the dendriform structure on $End(Sh(X))$ will prove very useful.

\begin{lemma}[Rewriting Lemma]\label{rewriting}
 For any $y_1,...,y_n$ in $X$, the word $y_1...y_n$ can be rewritten:
$$y_1...y_n=y_1\prec (y_2\prec (...\prec(y_{n-1}\prec y_n)...)).$$
\end{lemma}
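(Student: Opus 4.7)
The plan is to prove this by a straightforward induction on $n$, using only the recursive definition of the half-shuffle product given in the definition of $Sh(X)$, namely
\[
y_1\ldots y_n \prec z_1\ldots z_p = y_1\cdot (y_2\ldots y_n \shuffle z_1\ldots z_p).
\]
The essential observation is that when the left-hand factor is a single letter $y$, the recursion reduces to concatenation: setting $n=1$ we get $y\prec w = y\cdot (1\shuffle w) = y\cdot w = yw$ for any word $w$ (using that $1$ is the unit for $\shuffle$, as recalled immediately after Proposition \ref{freeness}).

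First I would record this one-letter identity as a preliminary remark. Then I would induct on $n$. The case $n=1$ is trivial, and the case $n=2$ is a direct application of the preliminary remark: $y_1\prec y_2 = y_1 y_2$. For the induction step, supposing the identity holds for words of length $n-1$, I apply the inductive hypothesis to $y_2,\ldots,y_n$ to rewrite the inner nested expression as the word $y_2\ldots y_n$; it then suffices to check that $y_1\prec (y_2\ldots y_n) = y_1 y_2\ldots y_n$, which is precisely the one-letter identity applied with $w=y_2\ldots y_n$. The induction closes and the result follows.

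There is no genuine obstacle here: the whole content of the lemma is an unfolding of the recursive definition of $\prec$, combined with the fact that $1$ is a unit for $\shuffle$. The only tiny point requiring care is the base of the nested recursion where an empty right-factor appears, which is handled cleanly by Schützenberger's convention $w\prec 1 = w$, $1\prec w = 0$ invoked earlier in the paper; but since in the rewriting the left factor at every step is a single non-trivial letter of $X$, only the benign side of this convention is ever used.
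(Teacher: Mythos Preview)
Your proof is correct and is exactly the kind of argument the authors had in mind: the paper does not give a proof but explicitly writes ``The proof is left as an exercice,'' pointing to the recursive definition of $\prec$ and to Sch\"utzenberger's conventions for the unit. Your induction together with the one-letter identity $y\prec w = y\cdot(1\shuffle w)=yw$ is precisely that exercise carried out.
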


The proof is left as an exercice. The Lemma can be used to prove the Prop.\ref{freeness}, see \cite[1.19]{schutz}.

\begin{cor}
If we write $\pi=\sum_{n\in\mathbb{N}^\ast}\pi_n$ the projection on $Sh_1(X)$ (where $\pi_n$ is the projection on $Sh_1(X)\cap Sh^n(X)$) orthogonally to the $Sh_n(X)$, $n\not= 1$, we get:
$$Id=\exp^{\prec}(\pi):=\sum_{n\in\mathbb N}\pi^{\prec n}=\sum_{n\in\mathbb N} \pi \prec (\pi\prec (...(\pi \prec \pi)...)),$$
where $\pi^{\prec 0}$ stands for the canonical projection on the scalars, $Sh_0(X)$, and $\pi^{\prec n}:=\pi\prec \pi^{\prec n-1}$.
\end{cor}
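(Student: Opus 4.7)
The plan is to show, by induction on $n$, that $\pi^{\prec n}$ acts as the projection onto $Sh_n(X)$ (the linear span of words of length $n$), and then to sum over $n$ to recover the identity.

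First I would unwind the definition of $\pi^{\prec n}$. Using $f \prec g = \prec \circ (f\otimes g)\circ \Delta$ and the deconcatenation coproduct
\[
\Delta(y_1\ldots y_m) = \sum_{k=0}^{m} y_1\ldots y_k \otimes y_{k+1}\ldots y_m,
\]
I observe that for any endomorphism $g$ of $Sh(X)$,
\[
(\pi \prec g)(y_1\ldots y_m) = \sum_{k=0}^{m} \pi(y_1\ldots y_k) \prec g(y_{k+1}\ldots y_m).
\]
Since $\pi$ is the projection onto $Sh_1(X)$, only the term $k=1$ survives, giving
\[
(\pi \prec g)(y_1\ldots y_m) = y_1 \prec g(y_2\ldots y_m)
\]
for $m\geq 1$, and $0$ on the empty word.

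Next, I run an induction on $n$. The base case $n=0$ is the projection on scalars by definition. For the inductive step, using the formula above with $g = \pi^{\prec (n-1)}$, I get
\[
\pi^{\prec n}(y_1\ldots y_m) = y_1 \prec \pi^{\prec (n-1)}(y_2\ldots y_m).
\]
By the induction hypothesis, the right-hand side vanishes unless $m-1 = n-1$, i.e.\ unless $m=n$, in which case it equals
\[
y_1 \prec \bigl(y_2 \prec \bigl(\ldots \prec (y_{n-1}\prec y_n)\ldots\bigr)\bigr).
\]
By the Rewriting Lemma (Lemma \ref{rewriting}), this iterated left half-shuffle is exactly the word $y_1\ldots y_n$. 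Hence $\pi^{\prec n}$ is the identity on $Sh_n(X)$ and zero on $Sh_m(X)$ for $m\neq n$.

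Summing over $n \in \mathbb{N}$ and using that $Sh(X) = \bigoplus_m Sh_m(X)$, every word $y_1\ldots y_m$ is recovered exactly once (from the term $n=m$), so $\sum_{n\in\mathbb{N}} \pi^{\prec n} = Id$. No step is truly delicate here; the only thing to watch is the boundary behavior at $n=0$ (using Sch\"utzenberger's convention $w \prec 1 = w$ so that $\pi \prec \pi^{\prec 0} = \pi$) and the careful identification of the inner iterated bracketing with the pattern required by the Rewriting Lemma.
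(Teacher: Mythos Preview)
Your proof is correct and follows essentially the same route as the paper. The paper states the Corollary immediately after the Rewriting Lemma and leaves the deduction implicit; you have spelled out exactly that deduction, showing inductively that $\pi^{\prec n}$ is the projection onto $Sh_n(X)$ and invoking the Rewriting Lemma to identify $y_1\prec(y_2\prec(\cdots\prec y_n)\cdots)$ with $y_1\ldots y_n$.
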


The series $\exp^{\prec}(\pi)$ is the ``time-ordered exponential'' of physicists; it is often called in analyis and physics the Picard series or Dyson-Chen series of $\pi$ (see e.g. \cite{bp}, where the link between these series and the Malvenuto-Reutenauer Hopf algebra is explained). Its structure was investigated recently in a series of articles, focussing mainly on the Magnus problem (find an expression for $\Omega:=\log(\exp^{\prec}(\pi))$), see \cite{emp,km}). We will be interested here in different issues, related to the meaning of the $\pi_n$ and their products with respect to the internal structure of shuffle algebras. Notice that, by their very definition and due to the Rewriting Lemma \ref{rewriting}, we have:

\begin{lemma}
 For any $n_1,...,n_k\in\mathbb N^\ast$, $\pi_{n_1,...,n_k}:=\pi_{n_1}\prec(\pi_{n_2}\prec ...(\pi_{n_{k-1}}\prec\pi_{n_k})...)$ is the canonical projection on the linear span of words $x_1...x_k$ with $|x_i|=n_i$. In particular, $\pi_{n_1,...,n_k}\circ\pi_{n_1,...,n_k}=\pi_{n_1,...,n_k}$, and the $\pi_{n_1,...,n_k}$ form a complete family (i.e. with total sum $Id$) of orthogonal idempotents in $End(Sh(X))$.
\end{lemma}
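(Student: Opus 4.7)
The plan is to compute $\pi_{n_1,\ldots,n_k}$ applied to an arbitrary word $w = y_1\ldots y_m$ and show that it vanishes unless $m = k$ and $|y_i| = n_i$ for every $i$, in which case it returns $w$ itself. Once this is established, idempotency, orthogonality, and completeness fall out for free.

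First I would unfold the dendriform product. By definition,
\[
(f \prec g)(w) = \prec \circ (f\otimes g)\circ \Delta(w) = \sum_{j=0}^{m} f(y_1\ldots y_j)\prec g(y_{j+1}\ldots y_m).
\]
Applied with $f = \pi_{n_1}$, which kills everything outside the weight-$n_1$ part of $Sh_1(X)$, only the $j=1$ term survives, and only if $|y_1|=n_1$. Hence
\[
(\pi_{n_1}\prec g)(y_1\ldots y_m) = \begin{cases} y_1 \prec g(y_2\ldots y_m) & \text{if } |y_1|=n_1,\\ 0 & \text{otherwise.}\end{cases}
\]

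Next, I would run an induction on $k$. The base $k=1$ is the definition of $\pi_{n_1}$. For the induction step, applying the formula above with $g = \pi_{n_2,\ldots,n_k}$ and using the inductive hypothesis (which forces $m-1 = k-1$ and $|y_i|=n_i$ for $i\geq 2$) yields
\[
\pi_{n_1,\ldots,n_k}(y_1\ldots y_m) = y_1 \prec \bigl(y_2 \prec (\ldots \prec (y_{k-1}\prec y_k)\ldots)\bigr)
\]
when $m=k$ and the weights match, and zero otherwise. The Rewriting Lemma \ref{rewriting} identifies the right-hand side with $y_1\ldots y_k$. This proves that $\pi_{n_1,\ldots,n_k}$ is the canonical projection onto the linear span of words of length $k$ with weight profile $(n_1,\ldots,n_k)$.

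The corollaries are then immediate: the images of these projections are pairwise disjoint (for distinct tuples) and their direct sum exhausts $Sh(X)$ since every word has a unique length and unique weight sequence. Hence $\pi_{n_1,\ldots,n_k}\circ\pi_{n_1,\ldots,n_k}=\pi_{n_1,\ldots,n_k}$, $\pi_{n_1,\ldots,n_k}\circ\pi_{m_1,\ldots,m_\ell}=0$ whenever the tuples differ, and $\sum \pi_{n_1,\ldots,n_k} = Id$. The only delicate point is the bookkeeping in the induction, namely verifying that the recursive unfolding of the iterated left half-shuffle collapses an arbitrary word onto a concatenation via Lemma \ref{rewriting}; everything else is automatic from the definitions.
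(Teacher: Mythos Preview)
Your proof is correct and follows exactly the route the paper indicates: the paper states the lemma as an immediate consequence of the definitions and the Rewriting Lemma~\ref{rewriting}, and your argument simply spells out that computation in detail by unfolding the deconcatenation coproduct and inducting on $k$. There is nothing substantively different here.
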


We are now in the position to define  and study the algebra of graded permutations.

\begin{defi}
Let us fix $k \in \mathbb{N}$. Let $\sigma \in \sym_k$ and $d:[k]\longrightarrow \mathbb{N}^*$.
We define a linear endomorphism of $Sh(X)$ by:
$$\Phi_{(\sigma,d)}:\left\{\begin{array}{rcl}
x_1\ldots x_l&\longrightarrow&x_{\sigma(1)}\ldots x_{\sigma(l)} \mbox{ if }k=l \mbox{ and }|x_{\sigma(i)}|=d(i) \mbox{ for all }i,\\
&\longrightarrow&0\mbox{ if not.}
\end{array}\right.$$
\end{defi}

For example, $\pi_n=\Phi_{(id_n,n)}$.\\

{\bf Notations.}\begin{enumerate}
\item We put $\displaystyle \S=\coprod_{k\geq 0} \sym_k \times Hom([k],\mathbb{N}^*)$, and $\VS=Vect(\S)$.
\item Let $\sigma \in \sym_k$ and $d:[k]\longrightarrow \mathbb{N}^*$.
We shall represent $(\sigma,d)$ by the biword $\left(\begin{array}{ccc}\sigma(1)&\ldots&\sigma(k)\\
d(1)&\ldots&d(k)\end{array}\right)$.
\end{enumerate}

\begin{lemma}
For all $(\sigma,d) \in \sym_k \times Hom([k],\mathbb{N}^*)$ and $(\tau,e) \in \sym_l \times Hom([l],\mathbb{N}^*)$,
$$\Phi(\sigma,d)\circ \Phi(\tau,e)=\left\{\begin{array}{l}
\Phi(\tau \circ \sigma,d)\mbox{ if }k=l \mbox{ and }d=e\circ \sigma,\\
0\mbox{ if not.}
\end{array}\right.$$
\end{lemma}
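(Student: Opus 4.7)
The plan is straightforward: since both $\Phi(\sigma,d)$ and $\Phi(\tau,e)$ are linear and defined on the basis of words in $Sh(X)$, it suffices to evaluate the composition on an arbitrary basis word $x_1 \ldots x_m$ and compare with $\Phi(\tau\circ\sigma, d)$. All the work is careful bookkeeping of length and weight conditions; there is no real structural obstacle.

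First I would apply $\Phi(\tau,e)$ to $x_1\ldots x_m$. By definition this is zero unless $m=l$ and $|x_{\tau(j)}| = e(j)$ for every $j \in [l]$, in which case the output is the length-$l$ word $y_1\ldots y_l$ with $y_j := x_{\tau(j)}$. Next I would apply $\Phi(\sigma,d)$ to this word: the result is zero unless $l = k$ and $|y_{\sigma(i)}| = d(i)$ for every $i \in [k]$, and otherwise it is $y_{\sigma(1)}\ldots y_{\sigma(k)} = x_{\tau(\sigma(1))}\ldots x_{\tau(\sigma(k))} = x_{(\tau\circ\sigma)(1)}\ldots x_{(\tau\circ\sigma)(k)}$.

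Now I would compare the combined constraints with those defining $\Phi(\tau\circ\sigma, d)$. The weight condition at the second step reads $|x_{\tau(\sigma(i))}| = d(i)$; substituting the weight condition from the first step, $|x_{\tau(j)}| = e(j)$ taken at $j = \sigma(i)$, gives $e(\sigma(i)) = d(i)$, i.e.\ $d = e\circ \sigma$. Under the hypotheses $k = l$ and $d = e\circ\sigma$ the two weight conditions collapse to the single requirement $|x_{(\tau\circ\sigma)(i)}| = d(i)$, which is exactly the nonvanishing condition for $\Phi(\tau\circ\sigma, d)(x_1\ldots x_m)$, and the explicit output $x_{(\tau\circ\sigma)(1)}\ldots x_{(\tau\circ\sigma)(k)}$ coincides with what $\Phi(\tau\circ\sigma, d)$ produces.

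Finally I would treat the vanishing case: if either $k\ne l$, or $k=l$ but $d\ne e\circ\sigma$, then on any word $x_1\ldots x_m$ at least one of the two steps yields zero (either because of a length mismatch, or because the weight condition from the first step is incompatible with that of the second), so $\Phi(\sigma,d)\circ\Phi(\tau,e) = 0$ on the basis, hence on all of $Sh(X)$. Extending by linearity over all basis words completes the verification. The only place one must take care is that the condition $d = e\circ\sigma$ has to be read with $\sigma$ on the right: this is forced by the fact that $\Phi(\tau,e)$ permutes the letters of the input before $\Phi(\sigma,d)$ tests their weights.
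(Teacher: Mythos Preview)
Your argument is correct and is precisely the ``direct computation'' the paper invokes without details: evaluate both sides on a basis word, track the length and weight constraints, and observe that the two layers of weight conditions are compatible exactly when $k=l$ and $d=e\circ\sigma$, in which case they collapse to the single condition defining $\Phi(\tau\circ\sigma,d)$. There is nothing to add; your bookkeeping is accurate, including the point that $\sigma$ (not $\tau$) appears on the right in the compatibility condition $d=e\circ\sigma$.
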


\begin{proof} Direct computation. \end{proof}

{\bf Remark.} If for all $n\geq 1$, $X_n$ is infinite, it is not difficult to show that the linear extension $\Phi:\VS\longrightarrow End(Sh(X))$ is injective.
Hence, we can define an associative internal product on $\VS$ by:
$$(\sigma,d)\circ (\tau,e)=\left\{\begin{array}{l}
(\tau \circ \sigma,d)\mbox{ if }k=l \mbox{ and }d=e\circ \tau,\\
0\mbox{ if not.}
\end{array}\right.$$
We shall from now on identify $\VS$ with a subspace of $End(Sh(X))$ via $\Phi$. \\

Let us write $p_n$ for the canonical projection on $Sh^n(X)$, so that $Id=\sum_np_n$. A direct inspection shows that the $p_n$ belong to $\VS$:
\begin{lemma}
For all $n \geq 0$:
$$p_n=\sum_{k=1}^n \sum_{\substack{p:[k]\longrightarrow \mathbb{N}^*\\p(1)+\ldots+p(k)=n}}(Id_k,p)$$
$$=\sum_{k=1}^n \sum_{d(1)+\ldots+d(k)=n} \left(\begin{array}{ccc}1&\ldots&k\\d(1)&\ldots&d(k)\end{array}\right).$$
\end{lemma}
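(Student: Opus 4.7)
The plan is to unpack the definition of $\Phi_{(\sigma,d)}$ in the special case $\sigma=\mathrm{Id}_k$ and observe that each summand on the right-hand side is the canonical projector onto a specific sub-space of $Sh(X)$, indexed by length and weight profile; then to reassemble these projectors to recover $p_n$.

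First I would note that for $k\geq 1$ and $p:[k]\to\mathbb{N}^*$, applying the definition of $\Phi$ with $\sigma=\mathrm{Id}_k$ gives, for any word $x_1\ldots x_l$,
\[
\Phi_{(\mathrm{Id}_k,p)}(x_1\ldots x_l)=\begin{cases}x_1\ldots x_l & \text{if } l=k \text{ and } |x_i|=p(i) \text{ for all } i,\\0 & \text{otherwise.}\end{cases}
\]
Hence $(\mathrm{Id}_k,p)$ is precisely the projector, parallel to the other homogeneous components, onto the linear span $W_{k,p}$ of the words of length $k$ whose weight profile is exactly $(p(1),\ldots,p(k))$.

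Second, I would observe that the subspaces $W_{k,p}$, as $(k,p)$ varies, form a direct-sum decomposition of the span of nonempty words of $Sh(X)$: a nonempty word has a well-defined length $k$ and a well-defined sequence of letter-weights $(p(1),\ldots,p(k))$, so it lies in exactly one $W_{k,p}$. Therefore, for fixed $k$, the sum $\sum_{p(1)+\cdots+p(k)=n}(\mathrm{Id}_k,p)$ is the projector onto the span of words of length $k$ and total weight $n$.

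Finally, I would invoke the grading assumption $X=\bigcup_{n\geq 1}X_n$: every letter has weight at least $1$, so a word of total weight $n\geq 1$ has length between $1$ and $n$. Summing over $k=1,\ldots,n$ thus gathers exactly the words of weight $n$, giving the desired identity $p_n=\sum_{k=1}^n\sum_{p(1)+\cdots+p(k)=n}(\mathrm{Id}_k,p)$. There is no real obstacle here; the only point requiring a moment's care is the upper bound $k\leq n$ on the length, which rests precisely on the fact that $X$ has no letters of weight $0$.
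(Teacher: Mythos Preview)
Your argument is correct and is exactly the ``direct inspection'' the paper alludes to just before stating the lemma; the paper gives no further proof. Your care in noting that the bound $k\le n$ comes from the absence of weight-$0$ letters is the only nontrivial point, and you handle it properly.
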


{\bf Notations}. \begin{enumerate}
\item Let $\sigma \in \sym_k$, $\tau \in \sym_l$. We define $\sigma \otimes \tau \in \sym_{k+l}$ by
$(\sigma \otimes \tau)(i)=\sigma(i)$ if $1\leq i\leq k$ and $(\sigma \otimes \tau)(i)=\tau(i-k)+k$ if $k+1 \leq i \leq k+l$.
\item Let $d:[k] \longrightarrow \mathbb{N}^*$ and $e:[l]\longrightarrow \mathbb{N}^*$. We define $d \otimes e:[k+l]\longrightarrow \mathbb{N}^*$ by
$(d\otimes e)(i)=d(i)$ if $1\leq i \leq k$ and $(d \otimes e)(i)=e(i-k)$ if $k+1 \leq i \leq k+l$.
\end{enumerate}

\begin{lemma}
$\VS$ is a dendriform subalgebra of $End(Sh(X))$.
\end{lemma}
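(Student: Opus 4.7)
The plan is to show directly that for any $(\sigma,d) \in \sym_k \times Hom([k],\mathbb{N}^*)$ and $(\tau,e) \in \sym_l \times Hom([l],\mathbb{N}^*)$, both $\Phi_{(\sigma,d)} \prec \Phi_{(\tau,e)}$ and $\Phi_{(\sigma,d)} \succ \Phi_{(\tau,e)}$ lie in the image of $\Phi$, by computing them on an arbitrary word $x_1\ldots x_m$.

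First, I would unwind the definition: by the deconcatenation coproduct,
$$(\Phi_{(\sigma,d)} \prec \Phi_{(\tau,e)})(x_1\ldots x_m) = \sum_{j=0}^{m} \Phi_{(\sigma,d)}(x_1\ldots x_j) \prec \Phi_{(\tau,e)}(x_{j+1}\ldots x_m).$$
By definition of $\Phi$, the summand vanishes unless $j=k$, $m=k+l$, $|x_{\sigma(i)}|=d(i)$ for $i\leq k$, and $|x_{\tau(i)+k}|=e(i)$ for $i\leq l$. Under these conditions, setting $y_i := x_{\sigma(i)}$ for $i\leq k$ and $y_{k+i} := x_{\tau(i)+k}$ for $i \leq l$, we have $y_i = x_{(\sigma\otimes\tau)(i)}$ and $|y_i| = (d\otimes e)(i)$, and the sum reduces to $y_1\ldots y_k \prec y_{k+1}\ldots y_{k+l}$.

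Next, I would apply the explicit shuffle formula stated earlier in the paper:
$$y_1\ldots y_k \prec y_{k+1}\ldots y_{k+l} = \sum_{\alpha \in Des_{\subseteq\{k\}},\ \alpha^{-1}(1)=1} y_{\alpha^{-1}(1)}\ldots y_{\alpha^{-1}(k+l)} = \sum_\alpha x_{((\sigma\otimes\tau)\circ\alpha^{-1})(1)}\ldots x_{((\sigma\otimes\tau)\circ\alpha^{-1})(k+l)}.$$
For each such $\alpha$, set $\rho_\alpha := (\sigma\otimes\tau)\circ\alpha^{-1}$ and $f_\alpha := (d\otimes e)\circ\alpha^{-1}$. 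The length- and weight-conditions on the inputs then translate exactly to the condition $|x_{\rho_\alpha(i)}| = f_\alpha(i)$ defining $\Phi_{(\rho_\alpha, f_\alpha)}$, giving
$$\Phi_{(\sigma,d)}\prec \Phi_{(\tau,e)} = \sum_{\substack{\alpha\in\sym_{k+l}\\ desc(\alpha)\subseteq\{k\},\ \alpha^{-1}(1)=1}} \Phi_{((\sigma\otimes\tau)\circ\alpha^{-1},\ (d\otimes e)\circ\alpha^{-1})}\ \in\ \VS.$$
The argument for $\succ$ is identical, with the sole change that $\alpha^{-1}(1)=1$ is replaced by $\alpha^{-1}(1)=k+1$, according to the formula given earlier for the $\succ$ half-shuffle. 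Since $\VS$ is by construction a linear subspace and we have already established in Lemma \ref{dendalgle} that the ambient $(End(Sh(X)),\prec,\succ)$ is dendriform, closure under $\prec$ and $\succ$ is all that is needed.

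The only real obstacle is purely bookkeeping: keeping the compositions $(\sigma\otimes\tau)\circ\alpha^{-1}$ and $(d\otimes e)\circ\alpha^{-1}$ straight, and checking that the weight conditions transform correctly under relabelling $y_i = x_{(\sigma\otimes\tau)(i)}$. Once the substitution is done cleanly, the indexing set of shuffles $\alpha$ is exactly the one appearing in the shuffle formula, so no genuine combinatorial difficulty arises.
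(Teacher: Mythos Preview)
Your argument is correct and essentially identical to the paper's own proof: both compute $(\sigma,d)\prec(\tau,e)$ on a word, reduce via the deconcatenation coproduct to the single term $x_{\sigma\otimes\tau(1)}\ldots x_{\sigma\otimes\tau(k)}\prec x_{\sigma\otimes\tau(k+1)}\ldots x_{\sigma\otimes\tau(k+l)}$ subject to the weight condition, apply the explicit half-shuffle formula, and read off the result as $\sum_{\alpha}\Phi_{((\sigma\otimes\tau)\circ\alpha^{-1},(d\otimes e)\circ\alpha^{-1})}$. The paper records the same formulas as equations (\ref{EQ1}) and (\ref{EQ2}).
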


\begin{proof} Let $(\sigma,d)$ and $(\tau,e) \in \VS$. We assume that $\sigma \in \sym_k$ and $\tau \in \sym_l$.
If $n \neq k+l$, then $((\sigma,d)\prec (\tau,e))(x_1\ldots x_n)=0$. If $n=k+l$, then:
\begin{eqnarray*}
((\sigma,d)\prec (\tau,e))(x_1\ldots x_{k+l})&=&
(\sigma,d).(x_1\ldots x_k) \prec (\tau,e).(x_{k+1}\ldots x_{k+l})\\
&=&\left\{\begin{array}{l}
x_{\sigma \otimes \tau(1)}\ldots x_{\sigma \otimes \tau(k)}\prec
x_{\sigma \otimes \tau(k+1)}\ldots x_{\sigma \otimes \tau(k+l)}\\[2mm]
\hspace{1cm} \mbox{ if } |x_{\sigma \otimes \tau(i)}|=(d\otimes e)(i) \mbox{ for all }i  ,\\[3mm]
0\mbox{ if not,}
\end{array}\right. \\
&=&\left\{\begin{array}{l}
\displaystyle \sum_{\alpha\in Des_{\subseteq \{k\}},\:\alpha^{-1}(1)=1}
x_{(\sigma \otimes \tau)\circ \alpha^{-1}(1)}\ldots x_{(\sigma \otimes \tau)\circ \alpha^{-1}(k+l)}\\[2mm]
\hspace{1cm} \mbox{ if } |x_{(\sigma \otimes \tau)\circ \alpha^{-1}(i)}|=(d\otimes e)\circ \alpha^{-1}(i) \mbox{ for all }i  ,\\[3mm]
0\mbox{ if not.}
\end{array}\right. 
\end{eqnarray*}
Hence:
\begin{equation}\label{EQ1}
(\sigma,d)\prec (\tau,e)=\sum_{\alpha \in  Des_{\subseteq \{k\}},\alpha^{-1}(1)=1}
((\sigma \otimes \tau)\circ \alpha^{-1},(d\otimes e)\circ \alpha^{-1}).
\end{equation}
Similarly:
\begin{equation}\label{EQ2}
(\sigma,d)\succ (\tau,e)=\sum_{\alpha \in  Des_{\subseteq \{k\}},\alpha^{-1}(1)=k+1}
((\sigma \otimes \tau)\circ \alpha^{-1},(d\otimes e)\circ \alpha^{-1}).
\end{equation}
So $\VS$ is a dendriform subalgebra of $End(Sh(X))$. \end{proof}

\begin{lemma}
 The idempotents $\pi_{n_1,...,n_k}$ belong to $\VS$ and generate a commutative subalgebra thereof for the composition product.
\end{lemma}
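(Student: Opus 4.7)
The plan is to give a direct identification of $\pi_{n_1,\ldots,n_k}$ as a specific generator of $\VS$, from which the closure and commutativity claims follow by inspection of the composition lemma.

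First I would show that
$$\pi_{n_1,\ldots,n_k}=\Phi\bigl(id_k, d\bigr), \qquad d(i)=n_i.$$
This can be seen in two equivalent ways. By definition $\Phi(id_k,d)$ sends $x_1\ldots x_l$ to itself precisely when $l=k$ and $|x_i|=n_i$ for all $i$, and to $0$ otherwise; this is exactly the canonical projection onto the linear span of words with weight profile $(n_1,\ldots,n_k)$, which is the description of $\pi_{n_1,\ldots,n_k}$ given by the previous lemma. Alternatively, one can proceed by induction on $k$ using the recursive definition $\pi_{n_1,\ldots,n_k}=\pi_{n_1}\prec\pi_{n_2,\ldots,n_k}$, the base case $\pi_n=\Phi(id_1,d)$ with $d(1)=n$ already noted after the definition of $\Phi$, and the previous lemma asserting that $\VS$ is a dendriform subalgebra of $End(Sh(X))$. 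In particular each $\pi_{n_1,\ldots,n_k}$ lies in $\VS$.

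Next I would compute compositions. By the composition lemma for $\VS$,
$$\Phi(id_k,d)\circ\Phi(id_l,e)=\begin{cases}\Phi(id_k,d) & \text{if } k=l \text{ and } d=e,\\ 0 & \text{otherwise,}\end{cases}$$
since $id_k\circ id_k=id_k$ and the required matching condition $d=e\circ id_k$ reduces to $d=e$. Translated back, this says
$$\pi_{n_1,\ldots,n_k}\circ\pi_{m_1,\ldots,m_l}=\begin{cases}\pi_{n_1,\ldots,n_k} & \text{if } k=l \text{ and } n_i=m_i \text{ for all } i,\\ 0 & \text{otherwise,}\end{cases}$$
which is symmetric in the two factors. (This also reconfirms that these elements form an orthogonal family of idempotents, as stated earlier.)

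Since every pair of generators commutes and the composition of any two generators is again either $0$ or a generator, the subalgebra of $(\VS,\circ)$ generated by the $\pi_{n_1,\ldots,n_k}$ coincides with their linear span and is commutative. I expect no real obstacle: the only subtlety is the clean identification of $\pi_{n_1,\ldots,n_k}$ with $\Phi(id_k,d)$, after which everything follows from the composition formula in $\VS$.
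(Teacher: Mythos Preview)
Your argument is correct. The identification $\pi_{n_1,\ldots,n_k}=\Phi(id_k,d)$ with $d(i)=n_i$ is exactly the content of the earlier lemma describing $\pi_{n_1,\ldots,n_k}$ as the projection onto words of a prescribed weight profile, and once that is in hand the composition formula for $\VS$ immediately yields orthogonality, idempotency, and commutativity.

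The paper proceeds differently. For commutativity it simply appeals to the already established fact that the $\pi_{n_1,\ldots,n_k}$ are mutually orthogonal idempotents. For membership in $\VS$ it argues inductively via the identity $\pi_n=p_n-\sum_{i_1+\cdots+i_k=n,\,k>1}\pi_{i_1,\ldots,i_k}$ (a rewriting of $p_n=\sum\pi_{i_1,\ldots,i_k}$), using that $p_n\in\VS$ by the preceding lemma and that $\VS$ is closed under $\prec$. Your route is shorter and more self-contained, since it avoids invoking the $p_n$ altogether. The paper's route, on the other hand, makes explicit the triangular relation between the $\pi_n$ and the $p_n$, which is precisely what is exploited in the next proposition to obtain a closed formula for $\pi_n$ in terms of the $p_{a_i}$.
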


\begin{proof} The second part of the proposition being a straigthforward consequence of the idempotency property, let us show that $\pi_n$ belong to $\VS$; 
since $\VS$ is a dendriform subalgebra of $End(Sh(X))$, the Lemma will follow.

Since $\pi_1=(id_1,1)$, let us assume that $\pi_i\in \VS$ for $i<n$. We get:
$$\pi_n=p_n-\sum_{i_1+...+i_k=n,\ k>1}\pi_{i_1,...,i_k}$$
and the Lemma follows by induction. \end{proof}

\begin{prop}\label{eqnPi}
 For all $n\geq 1$, we have:
$$\pi_n=\left(\begin{array}{c}1\\n\end{array}\right)=\sum_{k=1}^n(-1)^{k-1}\sum_{a_1+\ldots+a_k=n} p_{a_1}\prec(p_{a_2}\star\ldots \star p_{a_k}).$$
\end{prop}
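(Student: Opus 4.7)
The plan is to first dispose of the equality $\pi_n=\left(\begin{array}{c}1\\n\end{array}\right)$ as a tautology: the biword on the right corresponds to the pair $(id_1,d)$ with $d(1)=n$, and $\Phi_{(id_1,d)}$ is by construction the map sending $x_1\ldots x_l$ to $x_1$ when $l=1$ and $|x_1|=n$, and to zero otherwise, which is exactly the projector $\pi_n$.

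For the second equality I would pass to generating series. Set $P:=\sum_{n\geq 1}p_n=Id-p_0$ and $\pi:=\sum_{n\geq 1}\pi_n$. The Corollary preceding the statement gives $Id=\exp^{\prec}(\pi)=p_0+\pi+\pi^{\prec 2}+\pi^{\prec 3}+\cdots$; factoring one $\pi$ on the left and using Sch\"utzenberger's convention $w\prec 1=w$ (which forces $\pi\prec p_0=\pi$) rewrites this as $Id=p_0+\pi\prec Id$, whose positive-weight part reads
\[ P = \pi + \pi\prec P, \qquad\text{i.e.,}\qquad \pi = P - \pi\prec P. \qquad(\ast) \]

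Next I would iterate $(\ast)$ into itself $N$ times, each time converting a nested product $(\pi\prec P)\prec Q$ into $\pi\prec(P\star Q)$ via the dendriform axiom of Lemma \ref{dendalgle} and the associativity of $\star$. The outcome is the telescoping identity
\[ \pi = \sum_{j=1}^{N}(-1)^{j-1}\, P\prec P^{\star(j-1)} + (-1)^{N}\,\pi\prec P^{\star N}, \]
with the natural conventions $P^{\star 0}=p_0$ and $P\prec p_0=P$. Since $P$ is supported in weights $\geq 1$, the error term $\pi\prec P^{\star N}$ is supported in weights $\geq N+1$; taking $N\geq n$ kills it in weight $n$, and extracting the weight-$n$ component while expanding $P^{\star(j-1)}=\sum p_{a_2}\star\cdots\star p_{a_j}$ produces exactly the right-hand side of the claimed identity.

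The main obstacle is really just the bookkeeping around the boundary cases: one must check that $p_0$ is the unit for $\star$ and that Sch\"utzenberger's conventions entail $\pi\prec p_0=\pi$ and $P\prec p_0=P$, so that the iteration has a clean base and the $j=1$ term of the final sum reduces to $p_n$. Once these unit conventions are in place, the substitution loop driven by the dendriform axiom $(f\prec g)\prec h=f\prec(g\star h)$ is essentially mechanical, and the grading of $P$ provides the only convergence argument one needs.
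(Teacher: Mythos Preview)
Your proof is correct and follows essentially the same route as the paper: both derive from $Id=\exp^{\prec}(\pi)$ the identity $\pi\prec Id=p^{+}$ and then invert it using the dendriform axiom $(f\prec g)\prec h=f\prec(g\star h)$. The only cosmetic difference is that the paper performs the inversion in a single stroke, writing $\pi=\pi\prec p_0=\pi\prec(Id\star Id^{-1})=(\pi\prec Id)\prec Id^{-1}=p^{+}\prec\sum_{k\geq 0}(-1)^{k}(p^{+})^{\star k}$, whereas you unfold the same geometric series by iterating $(\ast)$ and truncating by weight; the substance is identical.
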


\begin{proof} Indeed, we have, according to the Rewriting lemma (\ref{rewriting}):
$p=\sum_{n\in\mathbb N}\pi^{\prec n}$ or, equivalently:
$$(1-\pi)\prec p=\pi^{\prec 0}=p_0$$
or: $\pi\prec p=\sum_{n\in\mathbb N^\ast}p_n$. Let us set $p^+:=\sum_{n\in\mathbb N^\ast}p_n $ and write $z$ for the convolution inverse of $p$ in $End(Sh(X))$ (recall that $p_0$ is the identity for the convolution product): 
$$z=p^{-1}=\sum_{k\in\mathbb N}(-1)^{k}(p^+)^k,$$
we get (recall that according to our conventions, $p_0$ is a right unit for $\prec$): 
$$\pi = \pi\prec p_0 = \pi\prec (p\star z)=(\pi \prec p)\prec z$$
where the third identity follows from the half-shuffle relations, so that:
$$\pi = (\sum_{n\in\mathbb N^\ast}p_n)\prec z=(\sum_{n\in\mathbb N^\ast}p_n)\prec \sum_{n\in\mathbb N}(-1)^{n}(p^+)^n,$$
from which the Proposition follows. \end{proof} 

Notice that the same argument would prove the following Lemma, useful to study Magnus formulas and Picard/Dyson-Chen series:

\begin{lemma}
  For any formally invertible series $q=1+\sum_{n\in\mathbb N^\ast}q_i$ in a dendriform algebra, we have:
$$ \mu= (\sum_{n\in\mathbb N^\ast}q_n)\prec \sum_{k\in\mathbb N}(-1)^{k}(\sum_{n\in\mathbb N^\ast}q_n)^k,$$
where $\mu$ is given by: $q=\exp^{\prec}\mu$.
\end{lemma}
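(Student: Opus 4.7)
The plan is to mimic verbatim the proof of Proposition \ref{eqnPi}, simply replacing $\pi$ by $\mu$, the series $p=\sum_n p_n$ by $q$, and $p^+=\sum_{n\geq 1} p_n$ by $q^+:=\sum_{n\geq 1} q_n$. The statement of the lemma is really the ``abstract'' version of that proof, stripped of the specific shuffle-algebra content and cast in a generic (appropriately completed) dendriform algebra containing a unit $1$ for $\star$ satisfying the Sch\"utzenberger conventions $a\prec 1=a$ and $1\prec a=0$.

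First I would unfold the defining equation $q=\exp^{\prec}(\mu)=\sum_{n\geq 0}\mu^{\prec n}$, where $\mu^{\prec 0}=1$ and $\mu^{\prec n}=\mu\prec \mu^{\prec n-1}$. Applying $\mu\prec (\cdot)$ term by term and using $\mu\prec \mu^{\prec n}=\mu^{\prec n+1}$, one gets
$$\mu\prec q=\sum_{n\geq 0}\mu^{\prec n+1}=\sum_{n\geq 1}\mu^{\prec n}=q-1=q^+.$$
Next I would introduce the $\star$-convolution inverse $z$ of $q=1+q^+$. Since $q^+$ is in the augmentation ideal (or topologically nilpotent in the formal completion where the statement lives), the geometric series
$$z:=\sum_{k\geq 0}(-1)^k (q^+)^{\star k}$$
is well defined and satisfies $q\star z=1$.

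Then I would exploit the dendriform axiom (\ref{dend}), $(a\prec b)\prec c=a\prec (b\star c)$, together with $\mu\prec 1=\mu$:
$$\mu=\mu\prec 1=\mu\prec(q\star z)=(\mu\prec q)\prec z=q^+\prec z.$$
Substituting the explicit series for $z$ yields exactly
$$\mu=\Bigl(\sum_{n\geq 1} q_n\Bigr)\prec \sum_{k\geq 0}(-1)^k\Bigl(\sum_{n\geq 1} q_n\Bigr)^{\star k},$$
which is the claimed identity.

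The only real obstacle is ensuring the formal setup is precise enough: one needs the ambient dendriform algebra to be complete with respect to a filtration for which $q^+$ lies in degree $\geq 1$, so that both $\exp^{\prec}(\mu)$ and the geometric series defining $z$ converge, and so that the manipulations $\mu\prec\sum=\sum\mu\prec(-)$ and $q\star z=1$ are legitimate. Once this framework is fixed, each step above is either an instance of the half-shuffle axiom (\ref{dend}) or a purely formal telescoping, so the argument goes through in exactly the same way as for Proposition \ref{eqnPi}.
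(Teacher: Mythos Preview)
Your proposal is correct and follows exactly the argument the paper intends: the paper's ``proof'' of this lemma consists of the single remark that the same argument as for Proposition~\ref{eqnPi} applies, and what you have written is precisely that argument with $p,\pi$ replaced by $q,\mu$. Your added care about the formal/completion hypotheses is appropriate and does not deviate from the paper's approach.
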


{\bf Remark.} From (\ref{EQ1}) and (\ref{EQ2}), we obtain that $(\sigma,d)\prec (\tau,e)$ is the sum of the shufflings of the biwords 
 $\left(\begin{array}{ccc}\sigma(1)&\ldots&\sigma(k)\\d(1)&\ldots&d(k)\end{array}\right)$
 and  $\left(\begin{array}{ccc}\tau(1)+k&\ldots&\tau(l)+k\\ e(1)&\ldots&e(k)\end{array}\right)$
 such that the first biletter is $\left(\begin{array}{c} \sigma(1)\\d(1)\end{array}\right)$,
 and $(\sigma,d) \succ (\tau,e)$ is the sum of the shufflings of these two biwords such that the first biletter is
 $\left(\begin{array}{c} \tau(1)+k\\e(1)\end{array}\right)$. For example:
 \begin{eqnarray*}
 \left(\begin{array}{cc} 1&2\\a&b\end{array}\right)\prec  \left(\begin{array}{cc} 2&1\\c&d\end{array}\right)&=&
  \left(\begin{array}{cccc} 1&2&4&3 \\a&b&c&d\end{array}\right)+  \left(\begin{array}{cccc} 1&4&2&3 \\a&c&b&d\end{array}\right)
  +  \left(\begin{array}{cccc} 1&4&3&2 \\a&c&d&b\end{array}\right),\\
   \left(\begin{array}{cc} 1&2\\a&b\end{array}\right)\succ  \left(\begin{array}{cc} 2&1\\c&d\end{array}\right)&=&
  \left(\begin{array}{cccc} 4&1&2&3 \\c&a&b&d\end{array}\right)+  \left(\begin{array}{cccc} 4&1&3&2 \\c&a&d&b\end{array}\right)
+  \left(\begin{array}{cccc} 4&3&1&2 \\c&d&a&b\end{array}\right).
\end{eqnarray*}

\section{Bidendriform structures on graded permutations} 

\begin{defi} \begin{enumerate}
\item Let $w=(i_1,\ldots,i_k)$ be a word with letters in $\mathbb{N}^*$, all distinct. There exists a unique increasing bijection $f$ from $\{i_1,\ldots,i_k\}$ into $[k]$.
The \emph{standardization} of $w$ is $std(w)=(f(i_1),\ldots,f(i_k))$. It is an element of $\sym_k$.
\item Let $\sigma \in \sym_k$ and $d:[k]\longrightarrow \mathbb{N}^*$. We put:
\begin{eqnarray*}
\Delta_\prec((\sigma,d))&=&\sum_{k=\sigma^{-1}(1)}^{n-1}
\left(\begin{array}{c}std(\sigma(1),\ldots,\sigma(k))\\ d(1),\ldots,d(k)\end{array}\right) 
\otimes \left(\begin{array}{c}std(\sigma(k+1),\ldots,\sigma(n))\\d(k+1),\ldots,d(n)\end{array}\right)\\
\Delta_\succ((\sigma,d))&=&\sum_{k=1}^{\sigma^{-1}(1)-1}
\left(\begin{array}{c}std(\sigma(1),\ldots,\sigma(k))\\ d(1),\ldots,d(k)\end{array}\right) 
\otimes \left(\begin{array}{c}std(\sigma(k+1),\ldots,\sigma(n))\\d(k+1),\ldots,d(n)\end{array}\right)\\
\end{eqnarray*}
This defines two coproducts on the augmentation ideal $\VS_+$ of the dendriform algebra $\VS_+$.
\end{enumerate}\end{defi}
 
 {\bf Example.} Let $a_1,a_2,a_3,a_4 \in \mathbb{N}^*$.
 \begin{eqnarray*}
\Delta_\prec\left(\left(\begin{array}{cccc}3&1&4&2\\a_1&a_2&a_3&a_4\end{array}\right)\right)&=&
 \left(\begin{array}{cc}2&1\\a_1&a_2\end{array}\right)\otimes \left(\begin{array}{cc}2&1\\a_3&a_4\end{array}\right)
 +\left(\begin{array}{ccc}2&1&3\\a_1&a_2&a_3\end{array}\right)\otimes \left(\begin{array}{c}1\\a_4\end{array}\right),\\
  \Delta_\succ\left(\left(\begin{array}{cccc}3&1&4&2\\a_1&a_2&a_3&a_4\end{array}\right)\right)&=&
 \left(\begin{array}{c}1\\a_1\end{array}\right)\otimes \left(\begin{array}{ccc}1&3&2\\a_2&a_3&a_4\end{array}\right).\\
 \end{eqnarray*}
 
 In other words, the coproducts of a biword $\left(\begin{array}{ccc}\sigma(1)&\ldots&\sigma(n)\\ d(1)&\ldots&d(n)\end{array}\right)$
 are given by the cuts of the biword into two parts  and the standardization of the first lines of the two parts of the biword; in $\Delta_\prec$, 
the biletter $\left(\begin{array}{c}1\\d\circ \sigma^{-1}(1)\end{array}\right)$ is in the left part and in $\Delta_\succ$, it is in the right part. \\
 
 {\bf Notations}. For all $x \in \VS_+$, we put:
$$\Delta_\prec(x)=x'_\prec \otimes x''_\prec,\:\Delta_\succ(x)=x'_\succ\otimes x''_\succ,\:\tdelta(x)=\Delta_\prec(x)+\Delta_\succ(x)=x'\otimes x''.$$

\begin{prop} For all $x,y \in \VS_+$:
\begin{eqnarray}\label{E3} \Delta_\prec(x \prec y)&=&x'_\prec \prec y'\otimes x''_\prec \star y''+x\otimes y+x\prec y'\otimes y''+\end{eqnarray}
\begin{eqnarray*}&&x'_\prec \otimes x''_\prec\star y+x'_\prec \prec y\otimes x''_\prec,\end{eqnarray*}
\begin{eqnarray}\label{E4} \Delta_\succ(x\prec y)&=&x'_\succ \prec y'\otimes x''_\prec \star y+x'_\succ\prec y\otimes x''_\succ+x'_\succ \otimes x''_\succ \star y, \end{eqnarray}
\begin{eqnarray}\label{E5} \Delta_\prec(x \succ y)&=&x'_\prec \succ y'\otimes x''_\prec \star y''+x'_\prec \succ y\otimes x''_\prec+x \succ y'\otimes y'',\end{eqnarray}
\begin{eqnarray}\label{E6} \Delta_\succ(x \succ y)&=&x'_\succ \succ y'\otimes x''_\succ \star y''+y\otimes x+y'\otimes x \star y''+\end{eqnarray}
\begin{eqnarray*}x'_\succ \succ y \otimes x''_\succ.\end{eqnarray*}
Consequently, $(\VS,\succ^{op},\prec^{op},\Delta_\succ^{op},\Delta_\prec^{op})$ is a bidendriform bialgebra.
\end{prop}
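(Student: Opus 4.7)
My approach is to verify each of the four identities (\ref{E3})--(\ref{E6}) by direct combinatorial computation, exploiting the explicit shuffle description of $\prec, \succ$ given in the Remark preceding the proposition together with the cut-and-standardize description of $\Delta_\prec, \Delta_\succ$. Once these four identities are in hand, the final sentence follows mechanically: Foissy's bidendriform axioms, as formulated in \cite{Foissy1}, are exactly the identities (\ref{E3})--(\ref{E6}) read on the opposite dendriform structure with opposite coproducts, which is why the conclusion is stated in terms of $\succ^{op}, \prec^{op}, \Delta_\succ^{op}, \Delta_\prec^{op}$.

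Concretely, fix basis elements $x=(\sigma,d)\in\sym_k\times \mathrm{Hom}([k],\mathbb{N}^*)$ and $y=(\tau,e)\in\sym_l\times \mathrm{Hom}([l],\mathbb{N}^*)$. By the Remark, $x\prec y$ is a sum over shufflings of the two biwords whose first biletter is inherited from $x$. To compute $\Delta_\prec(x\prec y)$ I cut each such shuffled biword $w$ at every position where the global minimum of the top line (which, after the shift $\tau(j)\mapsto \tau(j)+k$, is $\sigma^{-1}(1)$) sits in the left block, and standardize the two halves. I then partition the resulting sum according to the following cases: (a) the cut lies strictly inside the subword carrying the $x$-biletters; (b) the cut lies strictly inside the subword carrying the $y$-biletters; (c) the cut separates $x$ entirely to the left from $y$ entirely to the right; and the analogous degenerate sub-cases where one of the two standardized halves equals the whole of $x$ or of $y$. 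Matching these cases with the five terms on the right of (\ref{E3}) identifies $x'_\prec \prec y' \otimes x''_\prec\star y''$ with case~(a), $x\prec y'\otimes y''$ with case~(b), $x\otimes y$ with the boundary case~(c), and $x'_\prec\otimes x''_\prec\star y$, $x'_\prec\prec y\otimes x''_\prec$ with the degenerate cases where $y$ lies entirely on one side.

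The identities (\ref{E4}), (\ref{E5}), (\ref{E6}) are obtained by the same bookkeeping with different constraints: for (\ref{E4}) and (\ref{E5}) the minimum sits in the right half, respectively the first biletter comes from $y$, so fewer degenerate configurations arise; (\ref{E6}) is entirely symmetric to (\ref{E3}) under the formal swap $\prec\leftrightarrow\succ$, $x\leftrightarrow y$, which halves the work. The one point deserving care, and the main obstacle, is the compatibility of standardization with the operations: one must check that when a shuffled biword $w$ is cut and each half standardized, the result coincides with the shuffle (respectively half-shuffle) of the standardizations of the corresponding halves of $x$ and $y$ taken separately. This is a small lemma about order-preserving relabellings of disjoint unions and is the only non-formal combinatorial input; after it is dispatched, each of the four identities reduces to an exhaustive matching of shuffle-cut patterns against shuffle-shuffle-cut patterns.

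Finally, $\Delta_\prec$ and $\Delta_\succ$ restrict to maps $\VS_+\to \VS_+\otimes \VS_+$ since every cut is into two nonempty biwords, and $\prec,\succ$ restrict to $\VS_+$ as well, so $\VS_+$ is the appropriate underlying space. Given (\ref{E3})--(\ref{E6}), the bidendriform bialgebra axioms in their standard form hold for $(\VS,\succ^{op},\prec^{op},\Delta_\succ^{op},\Delta_\prec^{op})$ by direct substitution, completing the proof.
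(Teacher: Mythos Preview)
Your approach is essentially identical to the paper's: both proofs reduce to basis biwords $x=(\sigma,d)$, $y=(\tau,e)$, expand the half-shuffle as a sum over shufflings, apply the cut-and-standardize description of $\Delta_\prec,\Delta_\succ$, and then partition the resulting terms according to which of the $x$- and $y$-biletters land in the left or right part of the cut. The paper's case analysis is organized slightly more cleanly (it simply distinguishes, for each of $x$ and $y$, whether its biletters lie entirely left, entirely right, or in both parts, yielding the five, three, three, and four cases directly), whereas your phrasing ``the cut lies strictly inside the subword carrying the $x$-biletters'' is ambiguous in a shuffled word and your symmetry shortcut for (\ref{E6}) is not in the paper---but these are cosmetic differences, not substantive ones.
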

 
 \begin{proof} These identities are the axioms for dendriform bialgebras, as introduced in \cite{Foissy1}, to which we also refer for the structure results on dendriform bialgebras we will use further on.
We restrict ourselves to the case where $x=(\sigma,d)$ and $y=(\tau,e)$ are two biwords.
 Then $\Delta_\prec(x \prec y)$ is obtained by taking all the shufflings of $x$ and $y$ such the first letter of the result is the first letter of $x$,
 then cutting these words after the letter $\left(\begin{array}{c}1\\d\circ \sigma^{-1}(1)\end{array}\right)$. As a consequence,
 there are biletters of $x$ in the left part of the result. Hence, five case are possible:
 \begin{enumerate}
 \item There are letters of $x$ and $y$ in both parts: this gives the term $x'_\prec \prec y'\otimes x''_\prec \star y''$.
 \item There are letters of $x$ in both parts, and all the letters of $y$ are in the left part: this gives the term $x'_\prec \prec y \otimes x''_\prec$.
  \item There are letters of $x$ in both parts, and all the letters of $y$ are in the right part: this gives the term $x'_\prec \otimes x''_\prec\star y$.
 \item All the letters of $x$ are in the left part, and there are letters of $y$ in both parts: this gives the term $x\prec y'\otimes y''$.
 \item All the letters of $x$ are in the left part, and all the letters of $y$ are in the right part: this gives the term $x\otimes y$.
 \end{enumerate}
 Let us now consider $\Delta_\succ(x \prec y)$. It is obtained by taking all the shufflings of $x$ and $y$ such the first letter of the result is the first letter of $x$,
 then cutting these words before the letter $\left(\begin{array}{c}1\\d\circ \sigma^{-1}(1)\end{array}\right)$. As a consequence,
 there are biletters of $x$ in both parts of the result. Hence, three cases are possible:
 \begin{enumerate}
 \item There are letters of $y$ in both parts of the result: this gives the term $x'_\succ \prec y'\otimes x''_\succ \star y''$.
 \item All the letters of $y$ are in the left part: this gives the term $x'_\succ \prec y\otimes x''_\succ$.
 \item All the letters of $y$ are in the right part: this gives the term $x'_\succ \otimes x''_\succ \star y$.
 \end{enumerate}
We now consider $\Delta_\prec(x \succ y)$.  It is obtained by taking all the shufflings of $x$ and $y$ such the first letter of the result is the first letter of $y$,
 then cutting these words after the letter $\left(\begin{array}{c}1\\d\circ \sigma^{-1}(1)\end{array}\right)$. Consequently, there are letters of $x$ and $y$
 in the left part. So there are three possibilities:
 \begin{enumerate}
 \item There are letters of $x$ and $y$ in both parts of the result: this gives the term $x'_\prec \succ y'\otimes x''_\prec \star y''$.
 \item All the letters of $y$ are in the left part: this gives the term $x'_\prec \succ y\otimes x''_\prec$.
 \item All the letters of $x$ are in the left part: this gives the term $x\succ y' \otimes y''$.
 \end{enumerate}
 We now consider $\Delta_\succ(x \succ y)$.  t is obtained by taking all the shufflings of $x$ and $y$ such the first letter of the result is the first letter of $y$,
 then cutting these words before the letter $\left(\begin{array}{c}1\\d\circ \sigma^{-1}(1)\end{array}\right)$. Consequently, there are letters of $y$ 
 in the left part, and letter of $x$ in the right part. Consequently, four cases are possible.
 \begin{enumerate}
 \item  There are letters of $x$ and $y$ in both parts of the result: this gives the term $x'_\succ \succ y'\otimes x''_\succ \star y''$.
 \item There are letters of $y$ in both parts and all the letters of $x$ are in the right part: this gives the term $y'\otimes x \star y''$.
 \item There are letters of $x$ in both parts and all the letters of $y$ are in the left part: this gives the term $x'_\succ \succ y \otimes x''_\succ$.
 \item All the letters of $x$ are in the right part and all the letters of $y$ are in the left part: this gives the term $y\otimes x$.
 \end{enumerate}
We obtain in this way the four compatibilities (\ref{E3})-(\ref{E6}).  \end{proof} 
 
{\bf Remark.} We define $\Delta:\VS \longrightarrow \VS \otimes \VS$ by $\Delta(x)=x\otimes 1+1\otimes x+\Delta_\prec(x)+\Delta_\succ(x)$ 
for all $x\in \VS_+$, and $\Delta(1)=1\otimes 1$. Then $(\VS,\star,\Delta)$ is a Hopf algebra. \\
 
By the bidendriform rigidity theorem (according to which a bidendriform bialgebra is a free dendriform algebra, see \cite{Foissy1} for details):
 
\begin{cor}
$(\VS,\prec,\succ)$ is, as a dendriform algebra, freely generated by the subspace of dendriform primitive elements $Prim_{Dend}(\VS):
=Ker(\Delta_\prec)\cap Ker(\Delta_\succ)$.
\end{cor}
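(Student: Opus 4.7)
My plan is to invoke Foissy's bidendriform rigidity theorem \cite{Foissy1} directly, since the preceding proposition has already accomplished the hard work of verifying the four compatibility axioms. The rigidity theorem asserts that every graded connected bidendriform bialgebra is, as a dendriform algebra, free over the subspace of its totally primitive elements, i.e.\ those killed by both coproducts.

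The first step is to confirm that $\VS$ fits the hypotheses. I would use the grading by total weight, assigning the biword $(\sigma, d) \in \sym_k \times Hom([k], \mathbb{N}^*)$ the degree $d(1)+\cdots+d(k)$. With this choice $\VS^0 = \mathbf{Q}\cdot 1$, each homogeneous component $\VS^n$ is finite-dimensional (there are only finitely many compositions of $n$ and permutations of bounded length), and an inspection of (\ref{EQ1})--(\ref{EQ2}) together with the definitions of $\Delta_\prec, \Delta_\succ$ shows that all four operations preserve this grading. Hence $\VS$ is a graded connected bidendriform bialgebra in the sense of \cite{Foissy1}.

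A subtlety I would then address is the $^{op}$ convention: the preceding proposition formally establishes that $(\VS,\succ^{op},\prec^{op},\Delta_\succ^{op},\Delta_\prec^{op})$ is bidendriform, not the tuple $(\VS,\prec,\succ,\Delta_\prec,\Delta_\succ)$ itself. This is harmless for the conclusion. Passing to opposite coproducts leaves kernels unchanged, since the twist $a\otimes b \mapsto b\otimes a$ is a linear isomorphism; and dendriform freeness over a subspace is invariant under the symmetry exchanging $\prec \leftrightarrow \succ^{op}$ and $\succ \leftrightarrow \prec^{op}$. Applying the rigidity theorem to the opposite structure and translating back yields freeness of $(\VS,\prec,\succ)$ over $Prim_{Dend}(\VS) = \ker(\Delta_\prec)\cap \ker(\Delta_\succ)$.

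The main obstacle is not in this corollary but in the previous proposition's verification of the compatibilities (\ref{E3})--(\ref{E6}); here the argument is essentially a direct citation, and the only thing requiring care is bookkeeping the $^{op}$ conventions correctly when reading off the generating space from Foissy's statement.
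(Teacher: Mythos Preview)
Your proposal is correct and follows essentially the same route as the paper: the corollary is obtained by direct citation of the bidendriform rigidity theorem of \cite{Foissy1}, applied once the preceding proposition has established the bidendriform bialgebra structure. Your added care in checking the graded connected hypotheses and in handling the $^{op}$ convention is more explicit than what the paper writes, but the argument is the same.
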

 
The generating series of $\VS$ as a graded vector space is:
$$R(x)=\left(\sum_{k=0}^\infty k!x^k\right)\circ\left(\sum_{k=1}^\infty x^k\right).$$
 As a consequence of the bidendriform rigidity theorem, the formal series of bidendriform elements of $\VS$ is:
 $$P(x)=\frac{R(x)-1}{R(x)^2}.$$
 Here are the first coefficients of $R(x)$ and $P(x)$:
 $$\begin{array}{c|c|c|c|c|c|c|c|c|c}
 n&1&2&3&4&5&6&7&8&9\\
 \hline r_n&1&3&11&49&261&1\:631&11\:743&95\:901&876\:809\\
 \hline p_n&1&1&2&10&70&550&4\:730&44\:378&45\:646
 \end{array}$$

\section{The dendriform descent algebra}

Recall that the descent algebra $\mathbb{D}$ of a tensor algebra and, more generally, of any graded bialgebra $H$, is the convolution algebra generated by the projections 
on the graded components of $H$ \cite{patJA,reutenauer}. The descent algebra is a graded algebra, and its graded components can be equipped with an internal composition product. These components identify with the classical Solomon descent algebras of type $A$ and form the main building block of noncommutative representation theory.
Motivated by the structural properties of the descent algebra, Fisher's thesis (where the dual notion is introduced and studied in various particular cases) \cite{fisher} and by the Proposition \ref{eqnPi}, which shows that the canonical projection in shuffle algebras belong to the dendrifrom subalgebra of $End(Sh(X))$ generated by the $p_n$:

\begin{defi}
We define $\Descd\subseteq \VS$ as the dendriform subalgebra of $End(Sh(X))$ and $\VS$ generated by the graded projections $p_n:Sh(X)\longrightarrow Sh^n(X)$.
\end{defi}

The algebra $\Descd$ is naturally graded (the degree of $p_n$ is $n$): $\Descd=\bigoplus\limits_{n\in\mathbb N}Descd_n$ and has the completion $\hat{\Descd}=\prod\limits_{n\in\mathbb N}Decsd_n$.
For simplicity, we do not emphasize the distinction between $\Descd$ and its completion when dealing with formal power series such as $p=\sum_np_n$ and will allow us for example  to write abusively $p\in \Descd$.   

Recall that for all $n$:
 $$p_n=\sum_{k=1}^n \sum_{d(1)+\ldots+d(k)=n} \left(\begin{array}{ccc}1&\ldots&k\\d(1)&\ldots&d(k)\end{array}\right).$$
For the classical descent algebra $\mathbb{D}$, a key property is the group-like behavior of the graded projections $p_n$ ($\Delta(p_n)=\sum_{i\leq n}p_i\otimes p_{n-i}$). We show now that this property is inherited, although in a more sophisticated way, in $\Descd$.
 
 \begin{prop}
 For all $n \geq 1$, $\Delta_\succ(p_n)=0$ and $\displaystyle \Delta_\prec(p_n)=\sum_{i=1}^{n-1} p_i \otimes p_{n-i}$.
 \end{prop}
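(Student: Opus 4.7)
The plan is to unfold the explicit formula for $p_n$ given just above the statement and feed each summand through the definitions of $\Delta_\prec$ and $\Delta_\succ$, exploiting the key fact that every biword appearing in $p_n$ has the identity permutation on its top row.

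More precisely, I would start from
$$p_n=\sum_{m=1}^n \sum_{d(1)+\ldots+d(m)=n} (Id_m,d),$$
and observe that for $\sigma=Id_m$ we have $\sigma^{-1}(1)=1$. Plugging this into the definition of $\Delta_\succ$, the summation range becomes $k=1,\ldots,0$, which is empty. Hence $\Delta_\succ((Id_m,d))=0$ for every summand, and so $\Delta_\succ(p_n)=0$.

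For $\Delta_\prec$, with $\sigma=Id_m$ the summation range is $k=1,\ldots,m-1$, and the standardizations are trivial: $std(1,\ldots,k)=Id_k$ and $std(k+1,\ldots,m)=Id_{m-k}$. Therefore
$$\Delta_\prec((Id_m,d))=\sum_{k=1}^{m-1}(Id_k,(d(1),\ldots,d(k)))\otimes (Id_{m-k},(d(k+1),\ldots,d(m))).$$
Summing over $m$ and over compositions $d$ of $n$ into $m$ parts, I would then swap the order of summation: group by the pair $(a,b)$ with $a=d(1)+\ldots+d(k)$ and $b=d(k+1)+\ldots+d(m)$. Since each $d(l)\geq 1$ and $k, m-k\geq 1$, the pair satisfies $a,b\geq 1$ and $a+b=n$. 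For fixed $(a,b)$, the left factor ranges over all $(Id_k,d')$ with $d'$ a composition of $a$ (of any length $k\geq 1$), summing to exactly $p_a$; analogously the right factor sums to $p_{n-a}$. Thus $\Delta_\prec(p_n)=\sum_{a=1}^{n-1}p_a\otimes p_{n-a}$.

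There is no real obstacle here: the argument is essentially bookkeeping once one notes that $\sigma^{-1}(1)=1$ for identity permutations, which kills $\Delta_\succ$ and makes the cuts in $\Delta_\prec$ factor cleanly through the decomposition $n=a+b$ of the total weight. The only point requiring mild care is the reindexing of the double sum, ensuring that every composition $d'$ of $a$ and every composition $d''$ of $b$ arises exactly once as $(d(1),\ldots,d(k))$ and $(d(k+1),\ldots,d(m))$ for a unique original $(m,d)$ and cut $k$.
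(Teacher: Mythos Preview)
Your proposal is correct and follows essentially the same approach as the paper: both unfold the explicit formula for $p_n$ as a sum over $(Id_m,d)$, use that $\sigma^{-1}(1)=1$ for the identity permutation (which the paper merely records as ``Clearly, $\Delta_\succ(p_n)=0$''), and then reindex the double sum for $\Delta_\prec$ by the weight split $n=i+(n-i)$ to recover $p_i\otimes p_{n-i}$. Your write-up is in fact slightly more explicit than the paper's on the reindexing step, but there is no difference in method.
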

 
 \begin{proof} Clearly, $\Delta_\succ(p_n)=0$. Moreover:
 \begin{eqnarray*}
 \Delta_\prec(p_n)&=&\sum_{k=1}^n \sum_{d(1)+\ldots+d(k)=n}
 \sum_{i=1}^{k-1} \left(\begin{array}{ccc}1&\ldots&i\\d(1)&\ldots&d(i)\end{array}\right)
\otimes \left(\begin{array}{ccc}1&\ldots&k-i\\d(i+1)&\ldots&d(k)\end{array}\right)\\
&=&\sum_{i=1}^{n-1}\left(\sum_{p=1}^i \sum_{d'(1)+\ldots+d'(p)=i}
 \left(\begin{array}{ccc}1&\ldots&p\\d'(1)&\ldots&d'(p)\end{array}\right)\right)\otimes\\
 && \left(\sum_{q=1}^{n-i} \sum_{d''(1)+\ldots+d''(q)=n-i}
 \left(\begin{array}{ccc}1&\ldots&q\\d''(1)&\ldots&d''(q)\end{array}\right)\right)\\
 &=&\sum_{i=1}^{n-1}p_i \otimes p_{n-i}.
\end{eqnarray*} \end{proof}

\begin{cor}\label{descdlibre}
 The dendriform descent algebra $\Descd$ is a sub bidendriform bialgebra of $\VS$. In particular, it is a free dendriform algebra over its dendriform primitive elements.
\end{cor}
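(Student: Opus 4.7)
The plan is to verify that $\Descd$ is stable under the two half-coproducts $\Delta_\prec$ and $\Delta_\succ$; together with the fact that $\Descd$ is by construction stable under $\prec$ and $\succ$, this will establish that it is a sub bidendriform bialgebra of $\VS$. The freeness statement will then follow directly from the bidendriform rigidity theorem of \cite{Foissy1}.

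First I would record that, by the preceding proposition, both $\Delta_\prec(p_n)=\sum_{i=1}^{n-1}p_i\otimes p_{n-i}$ and $\Delta_\succ(p_n)=0$ already lie in $\Descd\otimes\Descd$, so the property of having its two half-coproducts land in $\Descd\otimes\Descd$ holds on the dendriform generators of $\Descd$.

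The heart of the argument is to propagate this property along the half-products. Let $\mathcal{A}$ denote the subspace of $\Descd$ consisting of elements $x$ with $\Delta_\prec(x),\Delta_\succ(x)\in\Descd\otimes\Descd$. Each $p_n$ belongs to $\mathcal{A}$. Assuming $x,y\in\mathcal{A}$, I would inspect the compatibility identities (\ref{E3})--(\ref{E6}): each right-hand side is a sum of tensors whose two factors are obtained by applying $\prec$, $\succ$, or $\star=\prec+\succ$ to elements in the list $x,y,x'_\prec,x''_\prec,x'_\succ,x''_\succ,y',y''$. Since $\Descd$ is a dendriform subalgebra of $\VS$ (and hence stable under $\star$) and since the primed pieces already lie in $\Descd$ by hypothesis, every such tensor lies in $\Descd\otimes\Descd$. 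Thus $x\prec y$ and $x\succ y$ belong to $\mathcal{A}$, so $\mathcal{A}$ is a dendriform subalgebra of $\Descd$ containing the generators $p_n$; consequently $\mathcal{A}=\Descd$.

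With $\Descd$ identified as a sub bidendriform bialgebra of $\VS$, the second assertion of the corollary---freeness as a dendriform algebra over the dendriform primitives $Prim_{Dend}(\Descd)=Ker(\Delta_\prec)\cap Ker(\Delta_\succ)$ intersected with $\Descd$---is an immediate application of the bidendriform rigidity theorem of \cite{Foissy1}. I do not anticipate a significant technical obstacle: the genuinely combinatorial content has already been absorbed into the compatibilities (\ref{E3})--(\ref{E6}) and into the computation of $\Delta_\prec(p_n)$, after which stability is essentially automatic. The only mild subtlety is simply the bookkeeping required to read off, term by term from the right-hand sides of (\ref{E3})--(\ref{E6}), that every tensor factor is built from ingredients known to live in $\Descd$.
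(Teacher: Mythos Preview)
Your argument is correct and is exactly the reasoning the paper leaves implicit: the corollary is stated without proof, as an immediate consequence of the preceding proposition (giving $\Delta_\prec(p_n)$ and $\Delta_\succ(p_n)$), the compatibilities (\ref{E3})--(\ref{E6}) which propagate stability under the half-coproducts from generators to the dendriform subalgebra they generate, and the bidendriform rigidity theorem of \cite{Foissy1}. You have simply spelled out the routine induction that the authors take for granted.
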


\begin{theo}
The family $(\pi_n)_{n\geq 1}$ is a basis of the space of dendriform primitive elements of $\Descd$,
and these elements freely generate $\Descd$ as a dendriform algebra.
\end{theo}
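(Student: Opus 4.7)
My plan is to combine a direct check of primitivity with the bidendriform rigidity theorem from Corollary \ref{descdlibre}, proceeding in three steps.

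\emph{Step 1 (primitivity).} I would first verify that each $\pi_n$ lies in $Prim_{Dend}(\Descd)$. Since $\pi_n = (id_1,n)$ is a biword of length one, both sums defining $\Delta_\prec(\pi_n)$ and $\Delta_\succ(\pi_n)$ in the previous section are empty: the first ranges over $\sigma^{-1}(1) = 1 \leq k \leq 0$, the second over $1 \leq k \leq \sigma^{-1}(1) - 1 = 0$. Hence $\Delta_\prec(\pi_n) = \Delta_\succ(\pi_n) = 0$.

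\emph{Step 2 (generation).} Next I would show that the $\pi_n$'s generate $\Descd$ as a dendriform algebra. By the earlier Lemma identifying $\pi_{n_1,\ldots,n_k} = \pi_{n_1}\prec(\pi_{n_2}\prec\cdots\prec\pi_{n_k})$ as the projector onto words of composition shape $(n_1,\ldots,n_k)$, these form a complete family of orthogonal idempotents, and summing over all compositions of $n$ gives
$$p_n \;=\; \sum_{(n_1,\ldots,n_k)\vDash n}\pi_{n_1}\prec\bigl(\pi_{n_2}\prec\cdots\prec\pi_{n_k}\bigr).$$
Hence each $p_n$ lies in the dendriform subalgebra generated by the $\pi_m$'s; since $\Descd$ is by definition generated by the $p_n$'s, it is generated by the $\pi_n$'s as well.

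\emph{Step 3 (conclusion).} Finally, setting $V := \mathrm{span}(\pi_n : n \geq 1)$, Step 1 gives $V \subseteq Prim_{Dend}(\Descd)$ and Corollary \ref{descdlibre} says $\Descd$ is the free dendriform algebra on $Prim_{Dend}(\Descd)$. Applying the free dendriform functor to the inclusion $V \hookrightarrow Prim_{Dend}(\Descd)$ produces an injection of the free dendriform algebra on $V$ into $\Descd$ whose image is the dendriform subalgebra generated by $V$; Step 2 identifies this image with $\Descd$ itself. Equating the free dendriform algebras on $V$ and on $Prim_{Dend}(\Descd)$ then forces $V = Prim_{Dend}(\Descd)$, via the identification of the generators of a free dendriform algebra with its dendriform primitives. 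Linear independence of the $\pi_n$'s being immediate (they project onto the distinct $X_n$'s), they form a basis of primitives and freely generate $\Descd$.

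The hard part will be Step 3, where I must justify that a subspace $V$ of primitives generating $\Descd$ as a dendriform algebra exhausts all of $Prim_{Dend}(\Descd)$ and generates freely. This relies on functoriality of the free dendriform algebra construction under inclusions and on the canonical identification of its primitives with its generators, both packaged in the rigidity machinery of \cite{Foissy1}; by contrast Steps 1 and 2 are essentially formal given the machinery already in place.
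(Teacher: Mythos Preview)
Your proof is correct, but takes a genuinely different route from the paper in two places.

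\textbf{Primitivity (Step 1).} Your argument is considerably simpler than the paper's. The paper establishes primitivity of $\pi$ \emph{intrinsically} inside $\Descd$: it writes $\pi=p^+\prec t$ with $t=p^{-1}$, checks $\Delta_\succ(p^+)=0$ and $\tdelta(t^+)=t^+\otimes t^+$, and then expands $\Delta_\prec(p^+\prec t)$ using the bidendriform compatibility relation (\ref{E3}) together with the identity $p^+t^+=-p^+-t^+$ to see that all terms cancel. You instead use the concrete description $\pi_n=\left(\begin{smallmatrix}1\\n\end{smallmatrix}\right)$ in $\VS$ and observe that the defining sums for $\Delta_\prec$ and $\Delta_\succ$ are empty on length-one biwords. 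Your approach is more elementary; the paper's has the virtue of being purely structural (it would work for the element $p^+\prec p^{-1}$ in any bidendriform bialgebra with a grouplike $p$, without reference to an ambient $\VS$).

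\textbf{Conclusion (Step 3).} The paper bypasses your free-functor argument entirely. It simply observes that $\Descd$, being dendriform-generated by one element $p_n$ in each weight and free on its primitives, must have $\dim Prim_{Dend}(\Descd)_n\leq 1$; since $\pi_n$ is a nonzero primitive of weight $n$, it spans. Your route via the injection $Dend(V)\hookrightarrow Dend(P)$ and surjectivity is also valid, but the final step ``$Dend(V)\cong Dend(P)$ forces $V=P$'' is cleanest if you compare Hilbert series (the free dendriform algebra functor, being governed by a non-symmetric operad, has $\dim Dend(W)_n$ determined by the graded dimensions of $W$, strictly monotonically) rather than invoking primitives, which would require first checking that your dendriform isomorphism is a bidendriform bialgebra map.
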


\begin{proof} Recall the notations $p^+=Id-1=\sum\limits_{n\geq 1} p_n$, $p_0=1$. Then $\Delta_\succ(p^+)=0$ 
and 
$\Delta_\prec(p^+)=\tdelta(p^+)=p^+\otimes p^+$, so that $\Delta(Id)=Id\otimes Id$. 

We set $t:=p^{-1}=\sum_{n=0}^\infty (-1)^n (p^+)^n$ and $t^+=t-1$ and get: $\Delta(t)=\Delta(p^{-1})=p^{-1}\otimes p^{-1}=t\otimes t$ or, equivalently,
$\tdelta(t^+)=t^+\otimes t^+$.

We also have $\pi=p^+\prec t$. Since $\Delta_\succ(p^+)=0$, we get $\Delta_\succ(\pi)=0$ and 
$$\Delta_\prec(\pi)=p^+\otimes p^++p^+\prec t^+\otimes p^+t^+ +p^+\otimes t^+ +p^+\prec t^+\otimes t^++p^+\otimes p^+t^+$$
$$+p^+\prec t^+\otimes p^+=p^+\otimes p^++p^+\prec t^+\otimes (-t^+-p^+) +p^+\otimes t^+ +p^+\prec t^+\otimes t^+$$
$$+p^+\otimes (-t^+-p^+)+p^+\prec t^+\otimes p^+=0$$
where we used the identities $pt=(1+p^+)(1+t^+)=1$ and $p^+t^+=-p^+-t^+$. 
So $\pi$ is primitive for both coproducts. Taking its homogeneous component of weight $n$, we obtain that $\pi_n$ is dendriform primitive for all $n$. \\

Recall now that, by the Corollary~\ref{descdlibre}, $\Descd$ is freely generated as a dendriform algebra by the space of its dendriform primitive elements,
and that, by definition, $\Descd$ is generated by at most one generator in each weight. So the homogeneous components of the space of dendriform primitive elements
for the weight are at most one-dimensional. Finally, $(\pi_n)_{n \geq 1}$ is a basis of $Prim_{dend}(\Descd)$. \end{proof}

\begin{cor}
The formal series of $\Descd$ is:
$$\frac{1-x-\sqrt{(1-x)(1-5x)}}{2x}.$$
\end{cor}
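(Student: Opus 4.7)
The plan is to extract the formal series directly from the freeness statement just proved. By the theorem, the augmentation ideal $\Descd_+=\bigoplus_{n\geq 1}\Descd_n$ is the free dendriform algebra on the primitives $(\pi_n)_{n\geq 1}$, a graded vector space with exactly one generator in each weight $n\geq 1$. Its generating series is therefore
$$g(x)=\sum_{n\geq 1}x^n=\frac{x}{1-x}.$$

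A free dendriform algebra on a graded vector space $V$ with series $g$ admits a basis indexed by planar binary trees whose internal vertices are labeled by elements of $V$; adjoining the empty tree yields the corresponding augmented object, which in our setting is $\Descd$ itself (the empty tree corresponding to $p_0\in\Descd_0=\mathbf{Q}\cdot p_0$). Decomposing a nonempty tree at its root -- a label in the space of generators together with a left and a right subtree, each possibly empty -- produces the quadratic functional equation
$$D(x)=1+g(x)\,D(x)^2$$
for the formal series $D(x)$ of $\Descd$.

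Substituting $g(x)=x/(1-x)$ transforms this equation into $x\,D(x)^2-(1-x)\,D(x)+(1-x)=0$, whose discriminant equals $(1-x)^2-4x(1-x)=(1-x)(1-5x)$. Choosing the branch normalized by $D(0)=1$ gives exactly
$$D(x)=\frac{1-x-\sqrt{(1-x)(1-5x)}}{2x},$$
which is the claimed formula.

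The only nontrivial input is the planar binary tree basis of a free dendriform algebra -- a classical result going back to Loday \cite{lod}; once this is granted, the rest is a routine quadratic computation.
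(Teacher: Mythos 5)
Your proof is correct and follows essentially the same route as the paper: the paper simply writes the series of $\Descd$ as the composition $\bigl(\frac{1-\sqrt{1-4x}}{2x}\bigr)\circ\bigl(\frac{x}{1-x}\bigr)$, i.e.\ the planar-binary-tree (Catalan) series of a free dendriform algebra evaluated at the generator series $x/(1-x)$ coming from one primitive generator $\pi_n$ in each weight. Your quadratic functional equation $D=1+g\,D^2$ is just this composition unpacked, and the branch choice and algebra are carried out correctly.
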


\begin{proof} The formal series of $\Descd$ is $\left(\frac{1-\sqrt{1-4x}}{2x}\right)\circ \left(\frac{x}{1-x}\right)$. \end{proof}

{\bf Examples.}
$$\begin{array}{c|c|c|c|c|c|c|c|c|c}
n&1&2&3&4&5&6&7&8&9\\
\hline dim(\Descd_n)&1&3&10&36&137&543&2\:218&9\:285&39\:587
\end{array}$$
This is sequence A002212 of the On-Line Encyclopedia of Integer Sequences. \\

{\bf Remark.}  As a consequence, it is not difficult to prove that the following families are bases of $\Descd_n$:
\begin{enumerate}
\item $n=1$: $(\pi_1)=\left(\left(\begin{array}{c}1\\1\end{array}\right)\right)$.
\item $n=2$: $(\pi_2,\pi_1\prec \pi_1,\pi_1 \succ \pi_1)=\left(\left(\begin{array}{c}1\\2\end{array}\right),
\left(\begin{array}{cc}1&2\\1&1\end{array}\right),\left(\begin{array}{cc}2&1\\1&1\end{array}\right)\right)$.
\item $n=3$: $(\pi_3,\pi_1 \prec \pi_2,\pi_1 \succ \pi_2,\pi_2 \prec \pi_1,\pi_2\succ \pi_1,\pi_1 \prec (\pi_1\prec \pi_1),\pi_1\prec (\pi_1 \succ \pi_1),
\pi_1 \succ (\pi_1 \prec \pi_1), \pi_1 \succ (\pi_1 \succ \pi_1)-(\pi_1 \succ \pi_1) \succ \pi_1,(\pi_1 \succ \pi_1) \succ \pi_1)=$
$$ \left(\begin{array}{c}
\left(\begin{array}{c}1\\3\end{array}\right),\left(\begin{array}{cc}1&2\\1&2\end{array}\right),
\left(\begin{array}{cc}2&1\\2&1\end{array}\right),\left(\begin{array}{cc}1&2\\2&1\end{array}\right),
\left(\begin{array}{cc}2&1\\1&2\end{array}\right),\\
\left(\begin{array}{ccc}1&2&3\\1&1&1\end{array}\right),\left(\begin{array}{ccc}1&3&2\\1&1&1\end{array}\right),
\left(\begin{array}{ccc}2&1&3\\1&1&1\end{array}\right)+\left(\begin{array}{ccc}2&3&1\\1&1&1\end{array}\right),\\
\left(\begin{array}{ccc}3&1&2\\1&1&1\end{array}\right),\left(\begin{array}{ccc}3&2&1\\1&1&1\end{array}\right)\end{array}\right).$$
\end{enumerate}
So $\Descd$ is not stable under the internal product $\circ$: for example,
$\left(\begin{array}{ccc}1&3&2\\1&1&1\end{array}\right)\circ \left(\begin{array}{ccc}3&1&2\\1&1&1\end{array}\right)=
\left(\begin{array}{ccc}2&1&3\\1&1&1\end{array}\right)\notin \Descd$.

\section{Abstract shuffle bialgebras and the rigidity theorem}

The coproduct acting on the shuffle bialgebra $Sh(X)$ satisfies the relation:
\begin{eqnarray}
\label{left} \Delta (x\prec y)&=&x'\prec y'\otimes x''\shuffle y'' + 1\otimes x\prec y
\end{eqnarray}
where we used the Sweedler notation $\Delta(x)=x'\otimes x''$ and set $1\prec 1=0$ in the formula (recall that $1\prec w=w\succ 1=0$ and $w\prec 1=1\succ w=w$ whenever $w$ is a non empty word).
The relation follows from the recursive definition of the shuffle product of words (or from the observation that for two words $w$ and $z$, 
the first letter of $w$ is the first letter of $w\prec z$, so that the expansion of $\Delta (w\prec z)$ always starts with the first letter of $w$).

These identities lead to the abstract definition of a shuffle bialgebra, a particular case of the notion of dendriform bialgebra \cite{Chap1,ronco,Foissy1}. 

Recall from the discussion at the begining of the article that a (non unital) shuffle algebra is, in general, a vector space $V$ equipped with a bilinear map $\prec$ satisfying the axiom (\ref{shuffle}): $(a\prec b)\prec c=a\prec (b\prec c+c\prec b)$. A unital shuffle algebra $A$ is then obtained by adding a unit to $V$: $A=A^+\oplus\CC:=V\oplus \CC$, with $a\prec 1:=a,\ 1\prec a:=0$. The half-product $1\prec 1$ is defined to be $0$.
The shuffle product is defined on $V=A^+$ by $a\shuffle b:=a\prec b+b\prec a$ or $\shuffle = \prec +\succ$ with $a\succ b:=b\prec a$ and extended to $A$ by requiring $1$ to be the unit. It provides $A$ with the structure of a commutative (and associative) algebra with unit. The shuffle algebras over generating sets $Sh(X)$ are the free algebras over $X$ for the relation (\ref{shuffle}) \cite{schutz}. A shuffle algebra $A$ is graded and connected if its decomposition into graded components $A=\bigoplus_nA_n$ is such $A_0=\CC$ and that the half-product is compatible with the grading ($A_n\prec A_m\subset A_{n+m}$).

\begin{defi}\label{asa}
 Let $A=V\oplus \CC$ be a unital shuffle algebra and $\Delta$ a coassociative counital coproduct on $A$. The coproduct defines a shuffle bialgebra structure on $A$ if and only if the relation (\ref{left}) is satisfied. A shuffle bialgebra is graded connected if it is a graded connected shuffle algebra and if the coproduct is compatible with the grading ($\Delta(A_n)\subset \bigoplus\limits_{i+j=n}A_i\otimes A_j$).
\end{defi}

As expected, shuffle bialgebras over finite sets $Sh(X)$ provide examples for this abstract definition of graded connected shuffle bialgebras.

\begin{lemma}
 The set of linear endomorphisms of a shuffle bialgebra $End(A)$ is equipped with the structure of a dendriform algebra by the products:
$$f\prec g(a):= f(a')\prec g(a'')$$
$$f\succ g(a):= f(a')\succ g(a'')=g(a'')\prec f(a').$$
\end{lemma}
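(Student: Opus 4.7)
The plan is to verify the three dendriform axioms for the pair $(\prec,\succ)$ on $End(A)$ by a direct computation, mimicking exactly the proof of Lemma~\ref{dendalgle}. The two ingredients needed are (i) that the underlying commutative shuffle algebra $A$ is itself a dendriform algebra for the pair $(\prec,\succ)$ with $x\succ y=y\prec x$ (the three axioms (\ref{dend})--(\ref{dend3}) all follow from the single relation (\ref{shuffle}) together with the commutativity $\shuffle=\prec+\succ$ with $x\succ y=y\prec x$), and (ii) that $\Delta$ is coassociative.

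First I would rewrite the products in the ``functional'' form, namely $f\prec g=\prec\circ(f\otimes g)\circ\Delta$ and similarly for $\succ$, so that $f\star g=\shuffle\circ(f\otimes g)\circ\Delta$. Then for the first axiom I compute
\begin{align*}
(f\prec g)\prec h&=\prec\circ(\prec\otimes Id)\circ(f\otimes g\otimes h)\circ(\Delta\otimes Id)\circ\Delta\\
&=\prec\circ(Id\otimes\shuffle)\circ(f\otimes g\otimes h)\circ(Id\otimes\Delta)\circ\Delta\\
&=f\prec(g\star h),
\end{align*}
where the second line uses the dendriform axiom (\ref{dend}) on $A$ together with coassociativity of $\Delta$. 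The two other axioms (\ref{dend2}) and (\ref{dend3}) are handled by exactly the same pattern: one feeds the corresponding identity of the operations $\prec,\succ,\shuffle$ on $A$, then applies coassociativity.

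There is no substantial obstacle; the only point worth noting is that the axiom (\ref{left}) of a shuffle bialgebra is \emph{not} used in this lemma. What is required of $A$ is only the dendriform relations on the half-products and the coassociativity of $\Delta$, so the statement (and its proof) is in fact a formal consequence of the dendriform structure of $A$ and the coassociativity of $\Delta$, with none of the stronger shuffle-bialgebra compatibility intervening. In particular the proof goes through verbatim as in Lemma~\ref{dendalgle}, only with the concrete shuffle algebra $Sh(X)$ replaced by the abstract $A$.
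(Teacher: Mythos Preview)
Your proposal is correct and follows precisely the paper's own approach: the paper merely states that ``the proof follows from the same arguments as for the Lemma~\ref{dendalgle}'', which is exactly the computation you spell out. Your additional remark that only coassociativity and the dendriform relations on $A$ are needed (not the compatibility (\ref{left})) is a valid and useful observation.
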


The proof follows from the same arguments as for the Lemma~\ref{dendalgle}.

From now on, $A$ will denote an arbitrary graded connected shuffle bialgebra.

\begin{cor}
 There is a unique map $\phi$ of dendriform algebras from $\Descd$ to $End(A)$ such that $\phi(p_n):=a_n$, where we write $a_n$ for the canonical projection from $A$ to $A_n$.
\end{cor}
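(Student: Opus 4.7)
My plan splits the statement into uniqueness and existence. Uniqueness is immediate from the very definition of $\Descd$ as the dendriform subalgebra of $End(Sh(X))$ generated by the family $(p_n)_{n \geq 1}$: any dendriform morphism out of $\Descd$ is completely determined by its values on this generating family, so at most one $\phi$ can satisfy the prescription $\phi(p_n) = a_n$.

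For existence, my plan is to invoke the theorem just proved, which asserts that $\Descd$ is freely generated as a dendriform algebra by the family $(\pi_n)_{n \geq 1}$. By the universal property of the free dendriform algebra, any prescription $\pi_n \mapsto \beta_n \in End(A)$ extends uniquely to a dendriform morphism $\phi : \Descd \longrightarrow End(A)$. It will then suffice to select the $\beta_n$'s in such a way that the resulting $\phi$ satisfies $\phi(p_n) = a_n$ for every $n \geq 1$.

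The main step is the inductive construction of the $\beta_n$'s. I would start from the earlier Lemma expressing the projections $\pi_{n_1,\ldots,n_k}$ as a complete family of orthogonal idempotents summing to the identity; projecting this identity to weight $n$ yields
$$p_n = \sum_{n_1 + \ldots + n_k = n} \pi_{n_1,\ldots,n_k} = \sum_{n_1 + \ldots + n_k = n} \pi_{n_1} \prec (\pi_{n_2} \prec (\ldots \prec \pi_{n_k})),$$
where the sum runs over all compositions of $n$. Applying $\phi$ and isolating the single-block term $k = 1$ expresses $\phi(p_n)$ as $\beta_n$ plus an iterated-$\prec$ polynomial in the $\beta_{n_i}$'s whose indices satisfy $n_i < n$. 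Setting this equal to $a_n$ yields a triangular recursion in the weight, with initial condition $\beta_1 = a_1$, which uniquely determines the sequence $(\beta_n)_{n \geq 1}$ inside $End(A)$.

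The only point to verify is the weight-triangularity of this change-of-generators relation between the $(p_n)$'s and the $(\pi_n)$'s, but this is built in: every composition of $n$ with at least two parts has all its parts strictly smaller than $n$, so the recursion is well-founded and terminates at each weight, producing the $\beta_n$'s and hence the required dendriform morphism $\phi$. I do not expect any genuine obstacle beyond this bookkeeping; the substantive ingredients (freeness of $\Descd$ on the $\pi_n$'s, the expansion of $p_n$ into primitives, and the dendriform structure on $End(A)$ from the preceding lemma) are all already in place.
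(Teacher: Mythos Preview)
Your proposal is correct and uses essentially the same ingredients as the paper: freeness of $\Descd$ on the $(\pi_n)$ and the weight-triangular relation between the $(p_n)$ and the $(\pi_n)$. The only minor difference is packaging: the paper observes directly that a triangular change of free generators yields free generators, so the $(p_n)$ themselves freely generate $\Descd$ and the universal property applies to them immediately; you instead apply the universal property to the $(\pi_n)$ and then solve the triangular system in $End(A)$ to pin down the $\beta_n$. These are two sides of the same coin.
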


Indeed, as the $\pi_n$ to which they are related by triangular equations ($\pi_n$ and $p_n$ are equal up to dendriform products of lower degrees elements),
the $p_n$ form a free family of dendriform generators of $\Descd$. The Corollary follows by the universal properties of free algebras.

\begin{lemma}
Let $\tau=\sum_{n\geq 1}\tau_n:=\phi(\pi)$. 
 For any $x,y\in A^+$, we have $\tau(x\prec y)=0$. In other terms, $\tau$ acts trivially on $A^+\prec A^+$.
\end{lemma}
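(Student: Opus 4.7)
The plan is to unfold $\tau$ via the closed formula for $\pi$ supplied by Proposition~\ref{eqnPi}, transport it through $\phi$, and then reduce using the coproduct identity (\ref{left}) together with the antipode axiom.

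First I would identify the images under $\phi$ of the two ingredients appearing in $\pi=p^+\prec t$. Since $\phi$ is a morphism of dendriform algebras, it preserves the convolution $\star=\prec+\succ$. The relation $\phi(p)=\sum_{n\ge 0}a_n=Id_A$ combined with $\phi(p)\star\phi(t)=\phi(p_0)=a_0$, and the fact that $a_0=u\circ\epsilon$ is the convolution unit of $End(A)$, forces $\phi(t)=S$, the antipode of $A$ (unique convolution inverse of $Id_A$ in the completion). Writing $J:=Id_A-a_0$ for the projection onto $A^+$, one obtains
$$\tau\;=\;\phi(p^+\prec t)\;=\;J\prec S,\qquad\text{i.e.,}\qquad \tau(a)=\sum J(a_{(1)})\prec S(a_{(2)}).$$

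Second I would apply this to $a=x\prec y$ and use (\ref{left}):
$$\Delta(x\prec y)=\sum (x_{(1)}\prec y_{(1)})\otimes(x_{(2)}\shuffle y_{(2)})+1\otimes(x\prec y).$$
Splitting $\Delta(x)=1\otimes x+x\otimes 1+\sum x_1\otimes x_2$ and similarly for $y$ produces nine pairs; the terms where the left factor starts with $1$ disappear by the conventions $1\prec w=0$, the term $1\otimes(x\prec y)$ is killed by $J(1)=0$, and the term $(x\prec y)\otimes 1$ contributes $x\prec y$ via $w\prec 1=w$. This leaves exactly six surviving contributions. On each one I would apply the dendriform axiom $(a\prec b)\prec c=a\prec(b\shuffle c)$, and use that $S$ is an algebra morphism for $\shuffle$ (since $A$ is commutative) so that $S(x_2\shuffle y_2)=S(x_2)\shuffle S(y_2)$, to bring every surviving term into the form $x\prec(\cdots)$ or $x_1\prec(\cdots)$.

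Third comes the cancellation. Collecting, the terms with leading factor $x$ assemble into
$$x\prec\bigl[\,y+S(y)+\textstyle\sum y_1\shuffle S(y_2)\,\bigr]=x\prec\bigl((Id\star S)(y)\bigr)=x\prec\epsilon(y)\cdot 1=0$$
because $y\in A^+=\ker\epsilon$; and the terms with leading factor $x_1$ assemble into
$$\sum x_1\prec\Bigl(\bigl[y+S(y)+\textstyle\sum y_1\shuffle S(y_2)\bigr]\shuffle S(x_2)\Bigr)=\sum x_1\prec(0\shuffle S(x_2))=0$$
for the same reason. Hence $\tau(x\prec y)=0$.

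The main obstacle is purely organisational: one has to match up the seven non-vanishing Sweedler pieces of $\Delta(x\prec y)$ correctly, and then recognise that after applying the dendriform rewriting the $y$-variables repeatedly assemble into $Id\star S$ applied to $y$. Once this pattern is spotted, the vanishing is immediate from the antipode identity $Id\star S=u\circ\epsilon$ and the hypothesis $y\in A^+$.
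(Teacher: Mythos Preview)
Your proof is correct and follows essentially the same route as the paper: both identify $\tau=J\prec S$ (the paper writes $a^+\prec S$), feed in the compatibility~(\ref{left}), apply the half-shuffle identity $(a\prec b)\prec c=a\prec(b\shuffle c)$ together with the multiplicativity of $S$ for $\shuffle$, and then cancel via $Id\star S=u\circ\epsilon$ evaluated on $y\in A^+$. The only difference is cosmetic: you split $\Delta(x)$ and $\Delta(y)$ into their reduced parts and treat the resulting cases explicitly, whereas the paper keeps everything in full Sweedler notation and obtains the same cancellation in one line.
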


Indeed, let $a^+=\phi(p^+)$. Since $\pi= p^+\prec (Id)^{-1}$, and since the convolution inverse of $Id$ in $End(A)$ is the antipode, we get:
$$\tau(x\prec y)=a^+(x'\prec y')\prec S(y'')\shuffle S(x'').$$
Since $1\prec u=0$ for an arbitrary $u\in A^+$, we get:
$$\tau(x\prec y)=(a^+(x')\prec y')\prec S(y'')\shuffle S(x'')$$
$$=a^+(x')\prec (y'\shuffle S(y''))\shuffle S(x'')=\tau(x)a_0(y)=0,$$
from which the Lemma follows.

\begin{cor}
 The operator $\tau$ is an idempotent: $\tau^2=\tau$.
\end{cor}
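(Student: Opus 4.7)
The plan is to leverage the identity $Id = \exp^{\prec}(\pi) = \sum_{n\geq 0}\pi^{\prec n}$ from Section~3, pushed through the dendriform morphism $\phi:\Descd \longrightarrow End(A)$ to obtain the resolution
$$Id_A = a_0 + \sum_{n\geq 1}\tau^{\prec n}$$
in $End(A)$, with $\tau^{\prec 0}=a_0$. Applied to any $a\in A^+$, for which $a_0(a)=0$, this yields
$$a = \sum_{n\geq 1}\tau^{\prec n}(a).$$

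Next I would verify that for $n\geq 2$, the endomorphism $\tau^{\prec n}$ sends $A$ into $A^+\prec A^+$. Unfolding the dendriform product on $End(A)$ gives $\tau^{\prec n}(a)=\sum \tau(a')\prec \tau^{\prec (n-1)}(a'')$ summed over $\Delta(a)=a'\otimes a''$. Splitting off the two boundary contributions $a\otimes 1$ and $1\otimes a$ from the reduced coproduct: the first vanishes because $\tau^{\prec (n-1)}(1)=0$, an easy induction starting from $\tau(1)=0$ and using $\Delta(1)=1\otimes 1$; the second vanishes because $\tau(1)=0$ and then $0\prec u = 0$. The surviving terms come from the reduced coproduct, where both $a'$ and $a''$ lie in $A^+$, so $\tau(a')\in A^+$ and $\tau^{\prec(n-1)}(a'')\in A^+$, placing the whole expression in $A^+\prec A^+$.

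Finally I would apply $\tau$ to both sides of the decomposition of $a$. By the preceding Lemma, $\tau$ annihilates $A^+\prec A^+$, so every term with $n\geq 2$ dies, leaving $\tau(a) = \tau(\tau(a)) = \tau^2(a)$. Since $\tau$ and $\tau^2$ both vanish on $A_0$ by grading, the identity $\tau^2=\tau$ follows on all of $A$.

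The only mildly delicate point I expect is the boundary-case bookkeeping in the middle step, where one must be attentive to the conventions $1\prec u=0$ and $u\prec 1 = u$ at each occurrence of the unit; once these are dispatched, the argument collapses to a single application of the previous lemma.
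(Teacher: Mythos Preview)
Your argument is correct and follows the same route as the paper: the paper also composes $\tau$ with $Id_A=\exp^{\prec}(\tau)$ and kills the terms $\tau\circ\tau^{\prec n}$ for $n\geq 2$ via the preceding lemma, noting that the image of $\tau$ lies in $A^+$. Your treatment is simply more explicit about the boundary contributions from $a\otimes 1$ and $1\otimes a$, which the paper absorbs into the one-line remark that $\tau(A)\subset A^+$.
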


Indeed, since $p=\exp^\prec(\pi)$, $Id_A=:a=\exp^\prec(\tau)$ and:
$$\tau=\tau\circ a=\tau(\exp^\prec(\tau))=\tau\circ \tau$$
since the image of $\tau$ in contained in $A^+$, and therefore $\tau\circ (\tau\prec(\tau...(\tau\prec\tau)...)=0$ for iterated products $(\tau\prec(\tau...(\tau\prec\tau)...)$ of an arbitrary length.

\begin{prop}\label{proj}
 The idempotent map $\tau$ is a projection onto the primitive elements of $A$.
\end{prop}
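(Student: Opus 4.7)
\medskip
\noindent\emph{Proof plan.} The plan is to establish both inclusions $\mathrm{Prim}(A)\subseteq\mathrm{Im}(\tau)$ and $\mathrm{Im}(\tau)\subseteq\mathrm{Prim}(A)$. The first is immediate: for a primitive $p\in A^+$, I expand $p=\sum_{k\geq 0}\tau^{\prec k}(p)$ using $Id_A=\exp^\prec(\tau)$. The $k=0$ term $a_0(p)$ vanishes, and for $k\geq 2$ the recursion $\tau^{\prec k}(p)=\sum\tau(p')\prec\tau^{\prec k-1}(p'')$ applied to $\Delta(p)=p\otimes 1+1\otimes p$ reduces the sum to $\tau(p)\prec\tau^{\prec k-1}(1)+\tau(1)\prec\tau^{\prec k-1}(p)$, which vanishes since $\tau(1)=0$ and, inductively, $\tau^{\prec j}(1)=0$ for all $j\geq 1$ (from $\Delta(1)=1\otimes 1$). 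Hence $p=\tau(p)\in\mathrm{Im}(\tau)$.

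For the other inclusion, I would first note the identity $\tau=a^+\prec S$ in $End(A)$, where $S$ is the antipode of $A$. This follows from $\pi=p^+\prec p^{-1}$ in $\Descd$, from the fact that $\phi$ also preserves the convolution product $\star=\prec+\succ$, and from the observation that $\phi(p^{-1})$ is the convolution inverse of $\phi(p)=Id_A$, namely $S$. For $x\in A^+$, the convention $1\prec z=0$ kills the $a_0$ contribution and yields $\tau(x)=\sum x'\prec S(x'')$. I then compute $\Delta(\tau(x))$ by applying the shuffle bialgebra compatibility~(\ref{left}) to each summand $x'\prec S(x'')$. Combining this with $\Delta\circ S=(S\otimes S)\circ\mathrm{flip}\circ\Delta$ (valid since $A$ is commutative) and coassociativity, the iterated Sweedler sum reorganises in terms of the fourfold coproduct $\Delta^{(3)}(x)=\sum x_{(1)}\otimes x_{(2)}\otimes x_{(3)}\otimes x_{(4)}$ to give
$$\Delta(\tau(x))=\sum x_{(1)}\prec S(x_{(4)})\otimes x_{(2)}\shuffle S(x_{(3)})+1\otimes\tau(x).$$
The inner pair $(x_{(2)},x_{(3)})$, for fixed outer indices, is the coproduct of a single coalgebra element, so the antipode axiom $\sum y'\shuffle S(y'')=\epsilon(y)\cdot 1$ collapses $\sum x_{(2)}\shuffle S(x_{(3)})$ to a scalar multiple of $1$. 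The counit then reduces the outer sum to $\sum x'\prec S(x'')\otimes 1=\tau(x)\otimes 1$, giving $\Delta(\tau(x))=\tau(x)\otimes 1+1\otimes\tau(x)$, so $\tau(x)\in\mathrm{Prim}(A)$.

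The main obstacle is the careful bookkeeping of the iterated Sweedler notation and correctly identifying the inner pair that collapses via the antipode. The argument is a dendriform half-product analogue of the classical fact that the convolution logarithm $\log^\star(Id)$ lies in the primitives of a cocommutative Hopf algebra, here transposed to the commutative shuffle bialgebra setting with $\prec$ in place of $\star$.
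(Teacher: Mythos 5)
Your proof is correct and follows essentially the same route as the paper: the key step, primitivity of $\tau(x)=\sum x'\prec S(x'')$, is obtained exactly as in the paper via the identity $\tau=a^+\prec S$, the compatibility (\ref{left}), the anti-coalgebra property of the antipode, coassociativity, and the collapse of the middle factor $\sum x_{(2)}\shuffle S(x_{(3)})$ to a scalar. The only (harmless) variation is that you show $\tau$ fixes primitives by expanding $Id=\exp^\prec(\tau)$ and killing the higher terms, whereas the paper gets this directly by evaluating $\tau=a^+\prec S$ on a primitive element, which gives $\tau(x)=x\prec 1=x$.
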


From the identity $\tau =a^+\prec S$ we get that for a primitive element $x$ of $A$, $\tau(x)=x\prec 1=x$.
Now, for $y\in A^+$, 
$$\Delta(\tau(y))=\Delta(a^+(y')\prec S(y''))=\Delta(y'\prec S(y'')) $$
$$=  (y^1\prec S(y_4))\otimes (y_2\shuffle S(y_3)) + 1\otimes y'\prec S(y'')$$
where we used the coassociativity of the coproduct, the property of the antipode $\Delta(S(y))=S(y'')S(y')$, and an extended Sweedler notation to write somehow abusively $y_1\otimes ...\otimes y_n$ for the iterated coproduct of order $n$ of $y$ (so that e.g. $(\Delta\otimes Id)\circ \Delta (y)=y_1\otimes y_2\otimes y_3$, and so on). 

From the coassociativity of the coproduct, we get $y_2\shuffle S(y_3)=1$ (the convolution product of the antipode with the identity is the null map on $A^+$ and the identity on the scalars). Finally, by cancellation of the non scalar terms in $y_2\shuffle S(y_3)$, we get:
$$\Delta(\tau(y))=y'\prec S(y'')\otimes 1+1\otimes y'\prec S(y'')=\tau(y)\otimes 1+1\otimes \tau(y),$$
from which the Proposition follows.

\ \par

In 2000, F. Chapoton introduced the breaking new idea that the classical Cartier-Milnor-Moore theorem holds in fact for generalized bialgebras such as dendriform bialgebras, provided an analogue of the Poincar\'e-Birkhoff-Witt theorem holds \cite{Chap1}. His main Theorem (\cite[Thm 1]{Chap1}) implies a ``rigidity theorem'' in the commutative case: the underlying algebras are then free shuffle algebras.
Chapoton's proof follows by adapting the classical proof of the Cartier-Milnor-Moore theorem \cite{mm} to dendriform bialgebras. M. Ronco contributed by various remarks to the final preprint version of \cite{Chap1} and proposed soon after another proof by adapting the combinatorial proof \cite{p0,Pat93,patJA} of the Cartier-Milnor-Moore theorem \cite{ronco2}.

As far as classical bialgebras are concerned, it is a well-known fact that the Leray theorem (which asserts that a graded connected commutative bialgebra over a field of characteristic zero is a free commutative algebra) is much simpler to prove than the Cartier-Milnor-Moore theorem, see e.g. \cite{patHopf} for a modern general proof and further references on the subject. This observation also holds for dendriform and shuffle algebras: the various proofs of the classical Leray theorem can be adapted to shuffle bialgebras to get simple and direct proofs of Chapoton's rigidity theorem.
We deduce here a proof of the Theorem from the $Descd$ approach.

\begin{theo}
 A graded connected shuffle bialgebra $A$ is isomorphic, as a shuffle bialgebra, to the free shuffle algebra over the vector space of its primitive elements $Prim(A)$.
\end{theo}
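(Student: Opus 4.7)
The plan is to exhibit a shuffle bialgebra isomorphism $\Psi:Sh(Prim(A))\to A$, where $Sh(Prim(A))$ denotes the free shuffle bialgebra on the graded vector space of primitives of $A$. First I would define $\Psi$ by invoking Proposition \ref{freeness}: since $Sh(Prim(A))$ is the free shuffle algebra on $Prim(A)$, the inclusion $Prim(A)\hookrightarrow A$ extends uniquely to a shuffle algebra morphism $\Psi:Sh(Prim(A))\longrightarrow A$, which respects the grading since $Prim(A)$ is graded and $\prec$ preserves weights. For surjectivity, by the Rewriting Lemma \ref{rewriting} each word $p_1\ldots p_n$ of $Sh(Prim(A))$ equals $p_1\prec(p_2\prec(\ldots\prec p_n)\ldots)$, so the image of $\Psi$ contains every iterated $\prec$-product of elements of $Prim(A)\subset A$. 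On the other hand, from $Id=\exp^\prec(\pi)$ in $\Descd$ combined with the dendriform morphism $\phi:\Descd\to End(A)$ of the preceding corollary one obtains $Id_A=\exp^\prec(\tau)$; for any $a\in A$, the expansion $a=\sum_n\tau^{\prec n}(a)$ is a finite sum by gradedness, each summand being an iterated $\prec$-product of elements of $\mathrm{Im}(\tau)=Prim(A)$ (by Proposition \ref{proj}), so $a$ lies in the image of $\Psi$.

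Next I would verify that $\Psi$ is a coalgebra morphism by induction on the weight. The compatibility $\Delta_A\circ\Psi=(\Psi\otimes\Psi)\circ\Delta_{Sh}$ holds on weight-one elements (the primitives) since $\Psi$ is the identity there. For a word $w$ of weight $n$, write $w=p\prec w'$ via the Rewriting Lemma with $p\in Prim(A)$ a letter and $w'$ of strictly lower weight. Both $\Delta_{Sh}(p\prec w')$ and $\Delta_A(\Psi(p)\prec\Psi(w'))=\Delta_A(\Psi(w))$ are produced by the \emph{same} compatibility identity (\ref{left}), valid in both shuffle bialgebras, so after applying $\Psi\otimes\Psi$ to the $Sh$ side, the inductive hypothesis on $w'$ together with the shuffle algebra morphism property $\Psi(x\prec y)=\Psi(x)\prec\Psi(y)$ matches the two expressions term by term.

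For injectivity, I would use the general fact that in any free shuffle algebra $Sh(V)$ with its deconcatenation coproduct the primitive elements are exactly $V$ (straightforward from expanding $\Delta(v_1\ldots v_n)=\sum v_1\ldots v_k\otimes v_{k+1}\ldots v_n$ in a basis and cancelling bigraded components). Applied to $V=Prim(A)$ this makes $\Psi$ the identity on $Prim(Sh(Prim(A)))=Prim(A)$. If the kernel $K$ were nonzero, choose $x\in K$ of minimal weight $n$; then $(\Psi\otimes\Psi)(\tdelta(x))=\tdelta(\Psi(x))=0$, and since $\tdelta(x)$ lies in $\bigoplus_{i+j=n,\:i,j>0}Sh_i(Prim(A))\otimes Sh_j(Prim(A))$ where $\Psi$ is injective by minimality of $n$, we conclude $\tdelta(x)=0$. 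Hence $x$ is primitive and $\Psi(x)=x\neq 0$, contradicting $x\in K$.

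The most delicate step will be verifying the coalgebra morphism property. The inductive argument has to handle the fact that (\ref{left}) mixes $\prec$ on the first tensor factor with the full shuffle $\shuffle=\prec+\succ$ on the second; one must carefully expand the iterated applications of (\ref{left}) on the $Sh(Prim(A))$ side and match them term by term with the corresponding expansions on the $A$ side, exploiting that $\Psi$ is already known to be a morphism of shuffle algebras and that the primitive letter $p$ used in the Rewriting Lemma decomposition makes the term $p'\prec w'_{(1)}\otimes p''\shuffle w'_{(2)}$ simplify symmetrically on both sides.
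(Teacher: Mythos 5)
Your argument is correct, and its two halves rest on exactly the same ingredients as the paper's proof: surjectivity from $Id_A=\exp^\prec(\tau)$ together with Proposition \ref{proj}, and injectivity from the compatibility (\ref{left}) used inductively to peel off the first primitive letter. The packaging differs, though. The paper fixes a graded basis of $Prim(A)$ and proves directly, by induction on the length $N$, that the iterated half-shuffles $p_1\prec(p_2\prec\ldots\prec p_N)$ of basis primitives are linearly independent in $A$; the bialgebra compatibility of the resulting identification with $Sh(Prim(A))$ is left implicit. You instead build the morphism $\Psi:Sh(Prim(A))\to A$ basis-free from the universal property (Proposition \ref{freeness}), verify explicitly that it is a coalgebra map, and get injectivity from a minimal-weight element of the kernel together with $Prim(Sh(V))=V$. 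What your route buys is precisely the part the paper glosses over: the ``as a shuffle bialgebra'' clause, since you check $\Delta_A\circ\Psi=(\Psi\otimes\Psi)\circ\Delta_{Sh}$; what the paper's route buys is brevity, as the linear-independence induction settles freeness in one stroke without introducing $\Psi$. Two small remarks on your write-up: in the kernel argument you should say explicitly that $\Psi$ preserves the weight, so the kernel is graded and a homogeneous $x$ of minimal weight exists; and the coalgebra-morphism step you flag as delicate is in fact immediate, because the letter $p$ in $w=p\prec w'$ is primitive, so (\ref{left}) collapses on both sides to $\Delta(p\prec w')=p\prec w'_{(1)}\otimes w'_{(2)}+1\otimes p\prec w'$, and the induction hypothesis on $w'$ plus $\Psi(x\prec y)=\Psi(x)\prec\Psi(y)$ finishes it with no term-by-term bookkeeping.
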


From the Proposition~(\ref{proj}) we know that $\tau$ projects to $P:=Prim(A)$. From the identity $Id=\exp^\prec(\tau)$, we deduce that the image of $\tau$, $P$, generates $A$ as a shuffle algebra and that an arbitrary element in $A$ can be written as a linear combination of iterated half-shuffle products $p_1\prec (p_2\prec ...(p_{n-1}\prec p_n)...)$, with $p_i\in P$

Let us choose a graded basis $\B$ of $P$ (by graded we mean that any $b\in \B$ belongs to a graded component $P_n$ of $P$).
To prove the theorem, it is enough to prove that an arbitrary linear combination of iterated half-shuffle products $\sum_{n\leq N}\sum_{p_1,...,p_n} \lambda_{p_1,...,p_n}p_1\prec (p_2\prec ...(p_{n-1}\prec p_n)...)$ with the $p_i$ in $\B$ vanishes if and only if all the coefficients $\lambda_{p_1,...,p_n}$ are null.

Let us prove this property by induction on $N$. We assume therefore that the $p_1\prec (p_2\prec ...(p_{n-1}\prec p_n)...)$, where $n<N-1$ and the $p_i$ run over $\B$ are linearly independent. Assume now that $X=\sum_{n\leq N}\sum_{p_1,...,p_n} \lambda_{p_1,...,p_n}p_1\prec (p_2\prec ...(p_{n-1}\prec p_n)...)=0$
 and that $\exists (p_1,...,p_N), \ \lambda_{p_1,...,p_N}\not=0$. Then, according to eqn~(\ref{left}), $0=\Delta(X)=\lambda_{p_1,...,p_N}p_1\otimes p_2\prec (...(p_{n-1}\prec p_n)...)+Z$, where $Z$ is a linear combination of elements that, by induction, are linearly independent of $p_1\otimes p_2\prec (...(p_{n-1}\prec p_n)...)$. The Theorem follows.

\begin{cor}
 In particular, the dendriform algebra of graded permutations $\VS$ acts naturally on an arbitrary graded connected shuffle bialgebra $A$.
\end{cor}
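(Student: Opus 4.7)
The plan is to deduce this corollary immediately from the rigidity theorem just established, by transporting the tautological action of $\VS$ on $Sh(Prim(A))$ along the isomorphism $A\cong Sh(Prim(A))$ to an action on $A$.

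First, I would choose any graded basis $\B$ of the graded vector space $Prim(A)$. The rigidity theorem provides an isomorphism $\psi\colon Sh(\B)\longrightarrow A$ of graded connected shuffle bialgebras. The definition of $\Phi_{(\sigma,d)}\in End(Sh(\B))$ given in Section 3 makes sense for any graded alphabet $\B$ (finiteness played no role there), so $\VS$ sits as a dendriform subalgebra of $End(Sh(\B))$. Conjugation by $\psi$ then yields a dendriform morphism $\VS\longrightarrow End(A)$, which is the desired action.

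Second, I would verify naturality, that is, independence of the chosen basis $\B$ and functoriality in $A$. The point is that the action admits the following intrinsic description: for homogeneous primitive elements $v_1,\ldots,v_l\in Prim(A)$,
$$\Phi_{(\sigma,d)}\bigl(v_1\prec(v_2\prec(\ldots\prec(v_{l-1}\prec v_l)\ldots))\bigr)=v_{\sigma(1)}\prec(v_{\sigma(2)}\prec(\ldots\prec(v_{\sigma(l-1)}\prec v_{\sigma(l)})\ldots))$$
when $l=k$ and $|v_{\sigma(i)}|=d(i)$ for all $i$, and zero otherwise. By the rigidity theorem, such iterated half-products with $v_i$ running over $\B$ form a linear basis of $A$, so this formula determines $\Phi_{(\sigma,d)}$ uniquely. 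The formula is manifestly basis-free and multilinear in its homogeneous primitive inputs, so it is unchanged by a change of basis. It also commutes with morphisms $f\colon A\to A'$ of graded connected shuffle bialgebras, since any such $f$ restricts to a graded linear map $Prim(A)\to Prim(A')$ and respects the half-shuffle products.

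The main obstacle is conceptual rather than technical: one must recognize that $\Phi_{(\sigma,d)}$, originally introduced as a permutation-with-weight-constraints acting on letters of a graded alphabet, is in fact intrinsic to any free shuffle algebra over a graded vector space. Once this observation is in place, the dendriform algebra homomorphism property of $\VS\to End(A)$ is inherited from the corresponding property of $\Phi\colon\VS\hookrightarrow End(Sh(\B))$, and naturality follows from the uniqueness clause in the rigidity theorem.
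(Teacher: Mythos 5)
Your proposal is correct and follows essentially the same route as the paper, whose entire argument is that the action is transported from $Sh(Prim(A))$ to $A$ along the isomorphism furnished by the rigidity theorem. Your additional verification that the action is basis-free and functorial (via the intrinsic formula on iterated half-products of homogeneous primitives, which form a basis by freeness) is a sound elaboration of what the paper leaves implicit.
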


This follows from the existence of an isomorphism $A\cong Sh(Prim(A))$.


\end{document}